\numberwithin{equation}{section}
\theoremstyle{plain}
\newtheorem{thm}{Theorem}[section]
\newtheorem{lemma}[thm]{Lemma}
\newtheorem{claim}{Claim}
\theoremstyle{definition}
\newtheorem{dfn}[thm]{Definition}
\newtheorem{ex}[thm]{Example}
\theoremstyle{remark}
\newtheorem{rem}[thm]{Remark}
\newcommand{\C}{\mathbb C}
\newcommand{\N}{\mathbb N}
\newcommand{\R}{\mathbb R}
\newcommand{\ve}{\varepsilon}
\newcommand{\Ind}{\text{\rm Ind}}
\newcommand{\Sing}{\text{\rm Sing}}
\newcommand{\Int}{\text{\rm Int}}
\def\rk{\operatorname{rank}}
\def\dim{\operatorname{dim}}
\def\grad{\operatorname{grad}}
\def\ees{{\accent"5E e}\kern-.385em\raise.2ex\hbox{\char'23}\kern-.08em}
\def\EES{{\accent"5E E}\kern-.5em\raise.8ex\hbox{\char'23 }}
\def\ow{o\kern-.42em\raise.82ex\hbox{
\vrule width .12em height .0ex depth .075ex \kern-0.16em \char'56}\kern-.07em}
\def\OW{O\kern-.460em\raise1.36ex\hbox{
\vrule width .13em height .0ex depth .075ex \kern-0.16em \char'56}\kern-.07em}
\begin{document}
\title[Atypical values at infinity of real polynomial maps]{Atypical values at infinity of real polynomial maps with $2$-dimensional fibers}

\author[Masaharu Ishikawa]{Masaharu Ishikawa}
\address{Faculty of Economics, Keio University, 4-1-1, Hiyoshi, Kouhoku, Yokohama, Kanagawa 223-8521, Japan}
\email{ishikawa@keio.jp}

\author[Tat-Thang Nguyen]{Tat-Thang Nguyen}
\address{Institute of Mathematics, Vietnam Academy of Science and Technology, 18 Hoang Quoc Viet road, Cau Giay district, 11300 Hanoi, Vietnam}
\email{ntthang@math.ac.vn}

\thanks{
The second author thanks Keio University for warm hospitality during his visit.
The first author is supported by JSPS KAKENHI Grant numbers JP19K03499, JP23K03098, 
JP23H00081,
JSPS-VAST Joint Research Program, Grant number JPJSBP120219602,
and Keio University Academic Development Funds for Individual Research.
The second author is partially supported by Vietnam National Foundation for Science and Technology Development (NAFOSTED) under grant number 101.04-2023.33.}

\begin{abstract}
We characterize atypical values at infinity of a real polynomial function of three variables by a certain sum of indices of the gradient vector field of the function restricted to a sphere with a sufficiently large radius.
This is an analogy of a result of Coste and de la Puente for real polynomial functions with two variables. 
We also give a characterization of atypical values at infinity of a real polynomial map whose regular fibers are $2$-dimensional surfaces.
\end{abstract}

\maketitle

\section{Introduction}

Let $f:\R^n\to \R^m$ be a real polynomial map, 
$\Sing(f)$ be the set of singular points of $f$ in $\R^n$, and $K_0(f)=f(\Sing(f))$.
A {\it bifurcation set} of $f$ is the smallest set of values in $\R^m$ outside which $f$ is a locally trivial fibration.
This is a semialgebraic set of  codimension at least one~\cite{Tho69, Ver76, KOS00}.
A regular value $\lambda\in f(\R^n)\setminus K_0(f)$ is called a {\it typical value at $\infty$} of $f$ if 
there is an open neighborhood over which $f$ is a trivial fibration. 
Otherwise, $\lambda$ is called an {\it atypical value at $\infty$} of $f$.
For example, the polynomial map $f(x,y)=x(xy+1)$ has no critical value but its bifurcation set is $\{0\}$.
There are several studies about the bifurcation sets of real polynomial maps, see for instance~\cite{tz, cp, JT17, INP19, DJT21}. 

Suppose $m=1$, that is, $f:\R^n\to\R$ is a real polynomial function.
Let $B_{a,R}$ be the closed ball in $\R^n$ centered at a point $a\in\R^n$ and of radius $R>0$.
Set
\[
   \Gamma=\left\{x\in\R^n\mid   
   \rk \begin{pmatrix}
   x-a \\
 \grad f
\end{pmatrix}\leq 1\right\}.
\]
Note that $\Sing(f)\subset \Gamma$.
We choose a center $a\in\R^n$ and a sufficiently large $R>0$ so that $\Gamma$ is transverse to $\partial B_{a,r}$ for any $r>R$ and 
$\Gamma\setminus \Int B_{a,R}$ is homeomorphic to $\Gamma\cap \partial B_{a,R}\times [0,1)$. 
Each connected component of $\Gamma\setminus\Int B_{a,R}$ is contained in either $\Sing(f)$ or $\Gamma\setminus \Sing(f)$. 
Throughout the paper, we always choose the center $a$ generic so that each
connected component of $\Gamma\setminus (\Sing(f)\cup\Int B_{a,R})$ is a curve.
These curves are called {\it tangency branches at $\infty$} of $f$.

For each point $p\in (\Gamma\setminus \Sing(f))\cap \partial B_{a,R}$, let $\Gamma_p$ denote the tangency branch at $\infty$ of $f$ passing through $p$.
Set $x_p(r)=\Gamma_p\cap \partial B_{a,r}$ for $r\geq R$
and define 
\[
  \lambda_p=\lim_{r\to\infty} f(x_p(r))\in \R\cup\{\pm\infty\}.
\]
Let $T_\infty(f)$ denote the set of values $\lambda\in\R$ for which there exists a curve $x:[R,\infty)\to\Gamma$ with $x(r)\in\Gamma\cap\partial B_{a,r}$ and $\lim_{r\to\infty}f(x(r))=\lambda$. 
Note that
\[
   T_\infty(f)\subset\{\lambda_p\in\R\mid p\in (\Gamma\setminus \Sing(f))\cap\partial B_{a,R}\}\cup K_0(f).
\]

The aim of this paper is to characterize atypical values of $f$ by observing its behavior on the sphere $\partial B_{a,R}$ with a sufficiently large radius $R>0$. 
Specifically, we focus on the vector field $X_{a,R}$ on $\partial B_{a,R}$ defined by the gradient vector field of the restriction of $f$ to $\partial B_{a,R}$.
For each isolated zero $p$ of $X_{a,R}$, the index $\Ind_p(X_{a,R})$ is defined by the degree of the map from $\partial B_{p,\ve}$ to the $(n-1)$-dimensional sphere given by $x\mapsto \frac{X_{a,R}(x)}{\|X_{a,R}(x)\|}$, where $\ve>0$ is a sufficiently small real number.
Note that each point of $(\Gamma\setminus \Sing(f))\cap\partial B_{a,R}$ is an isolated zero of $X_{a,R}$.
For each $\lambda\in T_{\infty}(f)$, let $\Gamma^{(\lambda)}$ be the union of tangency branches $\Gamma_p$ with $\lambda_p=\lambda$.  
For each connected component $\Omega$ of $\partial B_{a,R}\setminus f^{-1}(\lambda)$,
set 
\[
   \Ind(\lambda,\Omega)=\sum_{p\in\Gamma^{(\lambda)}\cap\Omega} \Ind_p(X_{a,R}).
\]


We focus on the case $n=3$. In this case, since regular fibers of $f$ are of dimension~$2$, their topology can be determined by the indices of the vector field $X_{a,R}$.
In consequence, we obtain the following theorem.
For the definition of a vanishing component, see Section~\ref{sec21}.

\begin{thm}\label{thm1}
Let $f:\R^3\to\R$ be a polynomial function and $\lambda\in T_\infty(f)\setminus K_0(f)$.
If $\Ind(\lambda,\Omega)\neq 0$ for some connected component $\Omega$ of $\partial B_{a,R}\setminus f^{-1}(\lambda)$ then $\lambda$ is an atypical value at $\infty$ of $f$. 
Conversely, if there does not exist a vanishing component at $\infty$ when $t$ tends to $\lambda$ and $\Ind(\lambda, \Omega)=0$ for any connected component $\Omega$ of $\partial B_R\setminus f^{-1}(\lambda)$ then $\lambda$ is a typical value at $\infty$ of $f$.
\end{thm}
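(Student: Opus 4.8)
The strategy is to analyse $f$ in a neighbourhood of infinity, using the spheres $\partial B_{a,r}$, $r\ge R$, as screens, and to read the indices of $X_{a,R}$ as topological data of the fibers near infinity. The starting point is that, since $\lambda\notin K_0(f)$ and $\Gamma$ is transverse to every $\partial B_{a,r}$ with $r>R$, one can choose a vector field $w$ on $f^{-1}(I)$ (for a small interval $I\ni\lambda$) with $df(w)\equiv1$ which is tangent to all these spheres away from the tangency branches — this is possible precisely where $\grad f$ is not proportional to $x-a$ — so there the fibration is locally trivial and no integral curve of $w$ escapes to infinity. Moreover, for $t$ near $\lambda$ a branch $\Gamma_q$ with $\lambda_q\ne\lambda$ meets $\{f=t\}$ only at bounded radius, so near such branches too the fibration is locally trivial and no integral curve escapes. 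Hence the obstruction to triviality of $f$ over $I$, as well as the phenomenon of a vanishing component, is concentrated at the branches $\Gamma_p$ with $\lambda_p=\lambda$, and it is visible, as $r\to\infty$, on the $1$-manifolds $f^{-1}(t)\cap\partial B_{a,r}$, whose only changes as $r$ grows are births and deaths of circles (at a branch with $\Ind_p(X_{a,R})=+1$) and saddle reconnections (at a branch with $\Ind_p(X_{a,R})=-1$). It is here that $n=3$ is used: the slices being $1$-dimensional, a single integer per branch records the move.

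For the first implication, fix a component $\Omega$ of $\partial B_{a,R}\setminus f^{-1}(\lambda)$ with $\Ind(\lambda,\Omega)\ne0$, and say $f>\lambda$ on $\Omega$ (the other sign is symmetric). Since $f$ increases from $\partial\Omega$ into $\Omega$, the field $X_{a,R}$ points inward along $\partial\Omega$, so Poincar\'e--Hopf on the compact surface $\bar\Omega$ gives $\sum_{q\in\Omega,\,X_{a,R}(q)=0}\Ind_q(X_{a,R})=\chi(\bar\Omega)$, and, following superlevel sets of $f|_{\bar\Omega}$, each zero $q$ contributes $-\Ind_q(X_{a,R})$ to the jump of $\chi(\{x:f(x)\ge s\})$ as $s$ increases past $f(q)$. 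Running the same count on the far spheres $\partial B_{a,r}$ and letting $r\to\infty$, the zeros on branches $\Gamma_q$ with $\lambda_q\ne\lambda$ contribute the Euler characteristic of the part of $f^{-1}(t)$ near infinity over $\Omega$ that is stable for $t$ in $I$, so the remaining contribution, $\Ind(\lambda,\Omega)$, must be absorbed by unstable behaviour; as it is nonzero, the surfaces $f^{-1}(t)$, $t\in I$, are not all diffeomorphic, the fibration is non-trivial over $I$, and $\lambda$ is atypical.

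For the converse, assume there is no vanishing component at infinity as $t\to\lambda$ and that $\Ind(\lambda,\Omega)=0$ for every component $\Omega$, and extend the field $w$ across a neighbourhood of each branch $\Gamma_p$ with $\lambda_p=\lambda$ so that its flow realizes a trivialization $f^{-1}(\lambda)\times I\to f^{-1}(I)$. The crux is a local product model for $f$ near such a $\Gamma_p$: ``no vanishing component'' forces the piece of $f^{-1}(t)$ running out along $\Gamma_p$ to stay attached to the core — a finger, not a free component — and the vanishing of the index sum over the ambient $\Omega$ forces the $1$-dimensional slices of that finger to be annuli rather than disks, so the finger is a trivial half-tube $S^1\times[0,\infty)$ along which $f$ is fibrewise constant. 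On such a tube $w$ can be completed with non-escaping flow, and patching the tube models with the sphere-tangent field by a partition of unity yields the desired $w$ on all of $f^{-1}(I)$.

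The main obstacle is precisely this local analysis at a tangency branch at infinity: one must show that the two hypotheses are exactly what upgrades ``no change of Euler characteristic'' to an honest product structure of $f$ near $\Gamma_p$ — in effect a normal form for a one-parameter family of surfaces degenerating only at infinity — and then that the globally assembled $w$ has complete flow, so that no fiber point runs off to infinity in finite $f$-time. Both steps rely essentially on the regular fibers being surfaces, so that the relevant $1$-dimensional slices are classified by a single numerical invariant; the naive analogue fails in higher dimensions, which is why the statement is restricted to functions of three variables.
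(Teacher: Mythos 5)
Your overall strategy points in the right direction (relate the indices on the sphere to the topology of the fiber pieces near infinity, and in the converse show those pieces are half-open annuli), but both halves have genuine gaps. In the first implication, the step from ``the remaining contribution $\Ind(\lambda,\Omega)$ must be absorbed by unstable behaviour'' to ``the surfaces $f^{-1}(t)$ are not all diffeomorphic, hence $\lambda$ is atypical'' is a non sequitur: the index only controls the piece of $f^{-1}(t)$ lying in one region $H\cong\Omega\times[0,1)$ beyond $\partial B_{a,R}$, and the total fibers could well remain mutually diffeomorphic. What is actually needed is the precise identity $\chi\bigl(f^{-1}(t)\cap H\bigr)=\Ind(\lambda,\Omega)$ for $t$ on the appropriate side of $\lambda$ (proved in the paper by Morse theory for the distance function $r_a$ on the fiber, using that on a generic center the critical points are non-degenerate, that branches with $\lambda_q\neq\lambda$ do not meet $f^{-1}(t)$ outside $B_{a,R}$, and that $\Ind_p(X_{a,R})=(-1)^{i(p_t)}$), together with the observation that a trivialization of $f$ over a neighborhood of $\lambda$ forces $f^{-1}(t)\cap H$ to be a collar of circles, hence of Euler characteristic $0$; the contradiction with $\Ind(\lambda,\Omega)\neq 0$ is then by contraposition. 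You never establish either of these two facts.

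In the converse, the claim that ``the vanishing of the index sum over $\Omega$ forces the slices of the finger to be annuli rather than disks'' ignores that $f^{-1}(t)\cap H$ may have several connected components whose Euler characteristics could cancel; one must rule out a positive-$\chi$ component, and this requires a genuine topological argument (in the paper: such a component is a disk properly embedded in $H\cong\Omega\times[0,1)$ with boundary parallel to a boundary circle of $\Omega$, which forces $\Omega$ to be a disk and $f^{-1}(t)\cap H$ to be connected, a contradiction with having at least two components). The hypothesis of no vanishing component is also used at a specific spot you do not make precise, namely to guarantee every component of $f^{-1}(t)$ meets $\partial B_{a,R}$, so that each component of $f^{-1}(t)\cap H$ has boundary. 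Finally, your construction of the trivialization --- a lift $w$ of $\partial/\partial t$ patched by a partition of unity with ``non-escaping'' tube models --- is exactly the hard analytic step (completeness of the flow near infinity), and you acknowledge rather than prove it; the paper sidesteps this by deducing from the annulus structure that the relative homotopy groups $\pi_i(f^{-1}(I_\lambda),f^{-1}(\lambda),x)$ vanish and then invoking the Serre-fibration criterion of Ishikawa--Nguyen together with Meigniez's theorem that such a submersive Serre fibration is a trivial fibration. As written, your argument therefore does not close either direction.
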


In the proof, it is shown that if $\Ind(\lambda,\Omega)\neq 0$ for some $\Omega$ then,
for $t$ sufficiently close to $\lambda$, there exists a connected component of $f^{-1}(t)\setminus \Int B_{a,R}$ diffeomorphic to a disk. 
This interpretation can be used when we generalize the assertion in Theorem~\ref{thm1} to polynomial maps $F:\R^n\to \R^{n-2}$ for $n\geq 3$. 
The statement is the following.

\begin{thm}\label{thm2}
Let $F:\R^n\to \R^{n-2}$ be a polynomial map, where $n\geq 3$.
Suppose that the radius $R>0$ of $B_{a,R}$ is sufficiently large.
Then, $\lambda\in F(\R^n)\setminus K_0(F)$ is a typical value at $\infty$ of $F$ if and only if the following are satisfied:
\begin{itemize}
\item[(1)] There is no vanishing component at $\infty$ when $t$ tends to $\lambda$;
\item[(2)] There exists a neighborhood $D$ of $\lambda$ in $\R^{n-2}$ such that, for any $t\in D$, 
\begin{itemize}
\item[(2-1)] $F^{-1}(t)\setminus \Int B_R$ has no compact, connected component, and
\item[(2-2)] $\chi(F^{-1}(t))= \chi(F^{-1}(\lambda))$ holds.
\end{itemize}
\end{itemize}
\end{thm}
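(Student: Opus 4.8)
The plan is to reduce the statement for $F:\R^n\to\R^{n-2}$ to the geometry already developed in the proof of Theorem \ref{thm1} by slicing along $\partial B_{a,R}$ and controlling the behavior of fibers outside the ball. First I would establish the "only if" direction: if $\lambda$ is typical at $\infty$, then $F$ is a trivial fibration over a neighborhood $D$ of $\lambda$, so in particular each $F^{-1}(t)$ for $t\in D$ is diffeomorphic to $F^{-1}(\lambda)$; this immediately gives (2-2), and (2-1) follows because a trivial fibration over a connected base cannot produce, for nearby $t$, a compact connected component of $F^{-1}(t)\setminus\Int B_{a,R}$ that is not already matched by one of $F^{-1}(\lambda)$ (such a component would have to appear or disappear as $t$ varies, contradicting triviality). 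The absence of a vanishing component at $\infty$, i.e. (1), is essentially the definition-level obstruction and follows from the local triviality of $f$ near infinity together with the fact that a vanishing component would force the topology of the part of the fiber outside $B_{a,R}$ to jump.

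For the "if" direction — the substantive half — I would argue that conditions (1), (2-1), (2-2) together with $\lambda\notin K_0(F)$ force $F$ to be a trivial fibration over a small neighborhood $D$ of $\lambda$. The strategy is: since $\lambda$ is a regular value and $R$ is large, $F$ restricted to $B_{a,R}$ is already a trivial fibration over a neighborhood of $\lambda$ (this is the compact, transverse part and is standard via Ehresmann, using that $\Gamma$ is transverse to $\partial B_{a,r}$). So the only source of non-triviality is the noncompact part $F^{-1}(t)\setminus\Int B_{a,R}$. The key step is to show that, under (1) and (2-1), the family $\{F^{-1}(t)\cap(\R^n\setminus\Int B_{a,R})\}_{t\in D}$ is a trivial fibration of manifolds-with-boundary, the boundary being the already-controlled piece on $\partial B_{a,R}$; one builds a trivializing vector field along the tangency branches, using that outside $B_{a,R}$ the set $\Gamma\setminus\Sing(F)$ is a union of curves transverse to each $\partial B_{a,r}$ and that condition (2-1) rules out the creation of new closed-off pieces as $t$ moves. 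Here the Euler characteristic hypothesis (2-2) is what upgrades "same number of noncompact ends of controlled type" to "diffeomorphic fibers": once (1) and (2-1) guarantee that no component is born, dies, or changes connectivity type outside the ball, the constancy of $\chi$ forces the noncompact pieces to have constant topology, and then the trivialization over $\partial B_{a,R}$ extends inward and outward to a global one. Gluing the trivialization on $B_{a,R}$ to the one on the complement along $\partial B_{a,R}$ (matching them up, as in the proof of Theorem \ref{thm1} where a disk component outside the ball was the obstruction) yields a trivial fibration over $D$, so $\lambda$ is typical at $\infty$.

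I expect the main obstacle to be the construction of the trivializing vector field on the noncompact part $F^{-1}(t)\setminus\Int B_{a,R}$ and the verification that (2-1) genuinely prevents the phenomenon analogous to the "disk appearing at infinity" from the proof of Theorem \ref{thm1}. In the function case $n=3$ this was handled via the index $\Ind(\lambda,\Omega)$ detecting disk components; for general codimension $n-2$ there is no single index but one must instead argue directly that a compact connected component of $F^{-1}(t)\setminus\Int B_{a,R}$ for $t$ near $\lambda$ is the only way the fibration can fail to be trivial outside a neighborhood of infinity, and that once (2-1) excludes it, the remaining noncompact pieces are controlled by their boundary behavior on $\partial B_{a,R}$ together with $\chi$. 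A delicate point is ensuring the vector fields on $B_{a,R}$ and on its complement can be chosen to agree on $\partial B_{a,R}$, which requires choosing the trivialization of the compact part compatibly with the flow along the tangency branches; this is a partition-of-unity argument but must be done carefully so that integrating the glued vector field stays proper and reaches all of $D$.
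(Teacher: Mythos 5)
Your ``only if'' direction is essentially fine (the paper treats it as immediate), but the substantive half contains a genuine gap: the entire ``if'' direction rests on constructing a trivializing vector field on $F^{-1}(D)\setminus \Int B_{a,R}$ along the tangency branches and gluing it to a trivialization inside the ball. Two things go wrong. First, for a map $F:\R^n\to\R^{n-2}$ with $n>3$ the set where the fibers are tangent to the spheres $\partial B_{a,r}$ is not a union of curves; the picture of finitely many tangency branches $\Gamma_p$ is specific to the function case $m=1$ developed in Sections 2--4, so the scaffolding you propose to build the vector field on does not exist in this generality. Second, and more fundamentally, producing a complete, properly integrable lift of $\partial/\partial t$ near infinity is exactly the phenomenon that fails at atypical values: fibers can all be diffeomorphic and yet the map can fail to be locally trivial, so the step ``constancy of $\chi$ plus no component born or dying forces constant topology, and then the trivialization extends'' is an assertion of the theorem's conclusion rather than an argument. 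Nothing in (1), (2-1), (2-2) directly prevents the flow you glue by a partition of unity from escaping to infinity in finite time, and you offer no mechanism (properness estimate, control of $\|\grad F_i\|$ at infinity, etc.) to rule this out.

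The paper's proof avoids this entirely and is worth contrasting. Using the product structure (v) and additivity of $\chi$ along the circles $F^{-1}(t)\cap\partial B_R$, conditions (1) and (2-1) give $\chi(F^{-1}(t)\setminus\Int B_R)\le 0$, and (2-2) then forces $\chi(F^{-1}(t)\setminus\Int B_R)=0$; with (2-1) this pins down every component of $F^{-1}(t)\setminus\Int B_R$ as a half-open annulus $S^1\times[0,1)$. Consequently $F^{-1}(t)$ deformation-retracts onto $F^{-1}(t)\cap B_R$, where $F$ is already a trivial fibration by compactness and transversality, so all relative homotopy groups $\pi_i(F^{-1}(D),F^{-1}(\lambda),x)$ vanish. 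The conclusion is then reached not by integrating a vector field but by quoting \cite{IN} (vanishing relative homotopy groups imply $F|_{F^{-1}(D)}$ is a Serre fibration) and \cite{Mei02} (a smooth submersion that is a Serre fibration over a ball is a trivial fibration). If you want to salvage your approach you would need either to reprove these two results or to supply the missing properness/completeness argument for the glued vector field; as written, the key step is missing.
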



The above theorem is stated again in Section~\ref{sec5} (Theorem~\ref{thm51}), where a precise condition for the radius $R$ is given. The condition~(2-1) is added instead of the condition about the indices in Theorem~\ref{thm1}. 
Note that atypical values of an algebraic family of real curves,
which can be seen as a restriction of a polynomial map from $\R^n$ to $\R^{n-1}$,
are characterized by the conditions~(1) and (2-2)~\cite{tz}. See also~\cite{JT17}.
Atypical values of a holomorphic map between connected complex manifolds $M\to B$ with
$\dim_\C M=\dim_\C B+1$ are also characterized by the conditions~(1) and (2-2)~\cite{JT18}.

This paper is organized as follows. In Section~\ref{sec2}, we prove a few lemmas concerning a choice of the center $a$ and the radius $R$ of the ball $B_{a,R}$.
In Section~\ref{sec25}, two examples of polynomial functions $f:\R^3\to\R$, which are
based on examples in~\cite{tz} (also~\cite{cp}), are given.
In Section~\ref{sec3}, we prove a theorem that characterizes a vanishing component at infinity of a real polynomial function. Using this theorem, we can obtain some argument for detecting a vanishing component
at infinity, see Remark~\ref{rem32}. Section~\ref{sec4} is devoted to the proof of Theorem~\ref{thm1},
and Section~\ref{sec5} is devoted to the proof of Theorem~\ref{thm2}.

\section{Preliminaries}\label{sec2}

\subsection{Vanishing component}\label{sec21}

In this section we give the definition of a vanishing component at $\infty$ for a polynomial map from $\R^n$ to $\R^m$ with $n>m\geq 1$.

\begin{dfn}\label{dfn21}
Let $F:\R^n\to\R^m$ be a polynomial map.
It is said that {\it there is a vanishing component at $\infty$ when $t$ tends to $\lambda$} if there exists a sequence of points $\{t_k\}$ in $\R^m$ such that
\[
\lim_{k\to\infty} t_k=\lambda
\quad\text{and}\quad
\lim_{k\to\infty}\max_{i}\inf\{\|x\|\in\R \mid x\in Y_{t_k,i}\}=\infty,
\]
where $Y_{t,1},\ldots,Y_{t,n_t}$ are the connected components of $F^{-1}(t)$.
\end{dfn}

\begin{rem}
The existence of a vanishing component at $\infty$ does not change even if the distance function $\|x\|$ is replaced by $\|x-a\|$ for any point $a\in \R^n$.
\end{rem}

\subsection{The center of the ball $B_{a,R}$}\label{sec22}

Let $f:\R^n\to\R$ be a polynomial function,
$\Sing(f)$ be the set of critical points of $f$ in $\R^n$, and $K_0(f)=f(\Sing(f))$.

\begin{lemma}\label{lemma23}
Let $f: \mathbb{R}^n\to \R$ be a polynomial function, 
$K$ be a finite set in $f(\R^n)\setminus K_0(f)$, and
$A_K$ be the set of points $a$ in $\mathbb{R}^n$ satisfying that,
for each $\lambda\in K$, 
there exists an open interval $I_\lambda$ in $\R$ containing $\lambda$ such that
the function on $f^{-1}(t)$ defined by $x\mapsto \|x-a\|^2$ has only non-degenerate critical points for any $t\in I_\lambda\setminus\{\lambda\}$.
Then the set $A_K$ is dense in $\R^n$.
\end{lemma}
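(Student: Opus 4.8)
The plan is to combine the classical fact that a square‑distance function is a Morse function on a fixed submanifold for almost every centre with a semialgebraic dimension count that upgrades this to a single centre which works for a whole punctured neighbourhood of each $\lambda\in K$. Throughout write $\phi_a(x)=\|x-a\|^2$.

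\emph{Reduction to regular values.} Since $f$ is polynomial, $K_0(f)=f(\Sing(f))$ is a semialgebraic subset of $\R$ of Lebesgue measure zero by Sard's theorem, hence finite, hence closed; as $K$ is a finite subset of $\R\setminus K_0(f)$, there is an open set $J\subset\R$ with $K\subset J$ and $J\cap K_0(f)=\emptyset$. Every $t\in J$ is then a regular value of $f$, so $f^{-1}(t)$ is a smooth hypersurface of $\R^n$ (possibly empty, possibly non‑compact), closed as a subset of $\R^n$.

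\emph{The pointwise fact and its semialgebraicity.} For a fixed submanifold $M\subset\R^n$ closed in $\R^n$ it is classical that $\phi_a|_M$ has only non‑degenerate critical points for almost every $a\in\R^n$: a point $x\in M$ is a critical point of $\phi_a|_M$ precisely when $a-x$ is normal to $M$ at $x$, and such a critical point is degenerate precisely when $a$ is a critical value of the endpoint map $\nu(M)\to\R^n$, $(x,v)\mapsto x+v$, of the normal bundle of $M$; since $\nu(M)$ and $\R^n$ have the same dimension, Sard's theorem applies (compactness of $M$ is not used). When $M=f^{-1}(t)$ with $t\in J$, let $\mathcal B_t\subset\R^n$ be the set of $a$ for which $\phi_a|_{f^{-1}(t)}$ has a degenerate critical point. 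The conditions ``$x\in f^{-1}(t)$'', ``$a-x$ is parallel to $\grad f(x)$'' and ``the Hessian of $\phi_a|_{f^{-1}(t)}$ at $x$ is degenerate'' are polynomial in $(x,a)$, so $\mathcal B_t$ is semialgebraic by the Tarski--Seidenberg theorem (project out $x$); being of measure zero it has $\dim\mathcal B_t\le n-1$. The same description with $t$ a free variable shows that
\[
\mathcal B=\bigl\{(t,a)\in J\times\R^n \bigm|\ \phi_a|_{f^{-1}(t)}\ \text{has a degenerate critical point}\bigr\}
\]
is semialgebraic.

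\emph{Dimension count and conclusion.} The base $J$ has dimension $1$ and every fibre of the projection $\mathcal B\to J$ has dimension $\le n-1$, so the fibre‑dimension inequality for semialgebraic sets gives $\dim\mathcal B\le n$. Applying that inequality to $q\colon\mathcal B\to\R^n$, $(t,a)\mapsto a$, the set $\{a\mid\dim q^{-1}(a)\ge1\}$ is semialgebraic of dimension $\le n-1$, hence of measure zero, so its complement $G\subset\R^n$ is dense. For $a\in G$ the set $\mathcal B^a=\{t\in J\mid(t,a)\in\mathcal B\}$ is semialgebraic of dimension $\le0$, i.e. finite; hence for each $\lambda\in K$ one may choose an open interval $I_\lambda$ with $\lambda\in I_\lambda\subset J$ and $(I_\lambda\setminus\{\lambda\})\cap\mathcal B^a=\emptyset$, and then for every $t\in I_\lambda\setminus\{\lambda\}$ the value $t$ is regular and $\phi_a|_{f^{-1}(t)}$ has only non‑degenerate critical points. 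Thus $G\subset A_K$ and $A_K$ is dense in $\R^n$. The crux is exactly this last step: for each fixed $t$ the classical statement yields only a full‑measure set of admissible centres, and a priori the union over $t\neq\lambda$ of the inadmissible centres could exhaust $\R^n$; it is the semialgebraicity of $\mathcal B$, through the fibre‑dimension inequality, that forces the set of ``bad'' values $t$ to be finite for a dense set of centres $a$, and hence to miss a punctured neighbourhood of $\lambda$.
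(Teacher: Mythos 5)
Your proof is correct, and its skeleton is the same as the paper's: characterize degenerate critical points of $x\mapsto\|x-a\|^2$ on a fiber as focal points, i.e.\ critical values of an endpoint map (Milnor's Lemma 6.5), apply Sard, and use semialgebraicity to turn ``measure zero'' into a dimension bound, so that the bad set in $(a,t)$-space has dimension at most $n$. Where you diverge is in how that bound is converted into density of good centers. The paper applies Sard once to a global endpoint map $E$ on the incidence variety $S\subset\R^n\times\R^n\times\R$ (so $t$ is built in from the start) and then argues by contradiction: if no nearby center were good, the Zariski closure $V$ of $E(\Sing(E))$ would, by Thom--Verdier generic local triviality of the projection $V\to\R^n$, contain a piece of dimension $n+1$, contradicting $\dim V\le n$. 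You instead apply Sard fiberwise (to the normal-bundle endpoint map of each $f^{-1}(t)$) and then invoke the semialgebraic fiber-dimension inequality twice: once to bound $\dim\mathcal B\le n$, and once for the projection $q:\mathcal B\to\R^n$ to conclude that $\{a\mid \dim q^{-1}(a)\ge 1\}$ has dimension at most $n-1$. This is a cleaner and slightly stronger finish: it avoids the contradiction/stratification argument and shows the good centers are not merely dense but the complement of a semialgebraic set of codimension one, with the bad $t$-set finite (rather than just discrete) for every good center. The only points you compress --- that the degeneracy of the restricted Hessian is a polynomial (rank/determinant) condition in $(x,a,t)$, and that $J$ can be taken semialgebraic, e.g.\ $J=\R\setminus K_0(f)$ --- are routine and are glossed at the same level in the paper itself.
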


\begin{proof}
Set
\[
   S=\left\{(x,v,t)\in \mathbb{R}^n\times \mathbb{R}^n\times (\mathbb{R}\setminus K_0(f)) \mid f(x)=t, 
   \rk 
   \begin{pmatrix}
   v\\
   \grad f
\end{pmatrix}\leq 1\right\}.
\]
It is easy to check that $S$ is a semialgebraic set of dimensional $n+1$ having no singular points.

Consider the ``endpoint'' map (see \cite{Mil63}):
\[
   E: S\to \mathbb{R}^n\times (\mathbb{R}\setminus K_0(f)),\quad (x,v,t)\mapsto (x+v, t).
\]
By the Sard Theorem, the set $E(\Sing(E))$ of singular values of $E$ has measure $0$. We can also check that $E(\Sing(E))$ is a semialgebraic set in $\mathbb{R}^n\times (\mathbb{R}\setminus K_0(f))$ of dimension at most $n$.
By \cite[Lemma 6.5]{Mil63}, $(a,t)\in E(\Sing(E))$ if and only if 
the function on $f^{-1}(t)$ defined by $x\mapsto \|x-a\|^2$ has a degenerate critical point.

We will prove the following claim: For each point $a\in \mathbb{R}^n$, any neighborhood of $a$ in $\mathbb{R}^n$ contains at least one point $x\neq a$ such that the intersection $(\{x\}\times \mathbb{R})\cap E(\Sing(E))$ is an isolated set. 
This implies that $A_K$ is dense in $\mathbb{R}^n$.

For a contradiction, we assume that there exist a point $a\in \mathbb{R}^n$ and a small neighborhood $U$ of $a$ in $\mathbb{R}^n$ satisfying that, for each $x\in U\setminus \{a\}$, there is an open interval $I_x\subset \mathbb{R}$ such that $\{x\}\times I_x\subset E(\Sing(E))$.

Since $E(\Sing(E))$ is a semialgebraic set in $E(S)$ of codimension at least one,
its Zariski closure $V$ in $\mathbb{R}^n\times \mathbb{R}$ is an algebraic subset of dimension at most $n$.
Let $\pi:V\to \mathbb{R}^n$ be the projection from $V\subset\R^n\times\R$ to $\R^n$ defined by $(x,t)\mapsto x$.
Since $\{x\}\times I_x\subset V$ for $x\in U\setminus\{a\}$, the inclusion $U\setminus \{a\}\subset \pi(V)$ holds.

On the other hand, it implies from~\cite{Tho69, Ver76}
that there exists an open ball $B\subset U\setminus\{a\}$ such that $\pi$ is trivial on $B$, which means that $\pi^{-1}(B)\subset V$ is diffeomorphic to $B\times \pi^{-1}(x) $ for $x\in B$. From the inclusion $U\setminus \{a\}\subset \pi(V)$, we get $\{x\}\times I_x\subset \pi^{-1}(x)$. Therefore $\dim \pi^{-1}(B)=\dim U+1=n+1$. This contradicts $\dim V\leq n$.
\end{proof}

\begin{rem}\label{remark24}
In Lemma~\ref{lemma23}, a point in $f^{-1}(t)$ around which the function on $f^{-1}(t)$ defined by $x\mapsto\|x-a\|^2$ is locally constant is regarded as a degenerate critical point.
\end{rem}

\subsection{Topology of fibers and indices of vector fields on the sphere}\label{sec23}

Choose a center $a\in\R^n$ of $B_{a,R}$ generic and the radius $R>0$ sufficiently large.
The interior of $B_{a,R}$ is denoted by $\Int B_{a,R}$ and its boundary is by $\partial B_{a,R}$.
For each point $p\in (\Gamma\setminus \Sing(f))\cap \partial B_{a,R}$, let $\Gamma_p$ denote the tangency branch at $\infty$ of $f$ passing through $p$.
Set $x_p(r)=\Gamma_p\cap\partial B_{a,r}$, then $f(x_p(r))$ is monotone with respect to the parameter $r$.
We use the following notations:
\begin{itemize}
 \item $f\nearrow \lambda$ along $\Gamma_p$ means that $f(x_p(r))$ is monotone increasing for $r\geq R$ and $\lim_{r\to\infty}f(x_p(r))=\lambda$.
 \item $f\searrow \lambda$ along $\Gamma_p$ means that $f(x_p(r))$ is monotone decreasing for $r\geq R$  and $\lim_{r\to\infty}f(x_p(r))=\lambda$.
\end{itemize}

\begin{rem}\label{rem25}
Let $\Gamma_p$ be the tangency branch at $\infty$ of $f$ passing through $p\in(\Gamma\setminus\Sing(f))\cap \partial B_{a,R}$. We have the following remarks.
\begin{itemize}
\item[(1)] The point $p$ is a critical point of the following two functions:
\[
\begin{split}
   & f|_{\partial B_{a,r_a(p)}}: \partial B_{a,r_a(p)}\to \R, \text{\,where $r_a(p)=\|p-a\|$},\\
   & r_a|_{f^{-1}(f(p))}: f^{-1}(f(p))\to \R, \text{\,where $r_a(x)=\|x-a\|$}.
\end{split}
\]
\item[(2)] Suppose that $f\nearrow \lambda_p$ along $\Gamma_p$. Then, $p$ is a local maximum (resp. minimum) point of $f|_{\partial B_{a,r_a(p)}}$ if and only if it is a local minimum (resp. maximum) point of $r_a|_{f^{-1}(f(p))}$.
\item[(3)] Suppose that $f\searrow \lambda_p$ along $\Gamma_p$. Then, $p$ is a local maximum (resp. minimum) point of $f|_{\partial B_{a,r_a(p)}}$ if and only if it is a local maximum (resp. minimum) point of $r_a|_{f^{-1}(f(p))}$ (cf.~Example~\ref{ex27}).
\end{itemize}

For simplicity,  we denote by $\mathcal{P}$ one of the properties ``local maximum'', ``local minimum'', 
``neither local maximum nor local minimum''. 
\end{rem}

\begin{lemma}\label{lemma25}
There exists a sufficiently large radius $R>0$ such that, for each $p\in (\Gamma\setminus \Sing(f))\cap \partial B_{a,R}$, the property $\mathcal P$ of $f|_{\partial B_{a,r}}$ is constant on $\Gamma_p$.
\end{lemma}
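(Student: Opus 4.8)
The plan is to reduce the assertion to the elementary fact that a semialgebraic function of one real variable with finite image is constant on a neighbourhood of $+\infty$, applied to the ``type'' of the critical point $x_p(r)$ as a function of the radius $r$.

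First I would enlarge $R$ so that all the conditions on $a$ and $R$ used so far to define the tangency branches at $\infty$ hold simultaneously: $\Gamma$ is transverse to $\partial B_{a,r}$ for every $r>R$, the set $\Gamma\setminus\Int B_{a,R}$ is homeomorphic to $(\Gamma\cap\partial B_{a,R})\times[0,1)$ with the second factor parametrised by the radius, and every connected component of $\Gamma\setminus(\Sing(f)\cup\Int B_{a,R})$ is a curve contained in $\Gamma\setminus\Sing(f)$. In particular $(\Gamma\setminus\Sing(f))\cap\partial B_{a,R}$ is finite, so there are only finitely many branches $\Gamma_p$, and by the product structure a further enlargement of $R$ merely truncates each $\Gamma_p$ to its tail $\{x_p(r)\mid r\geq R\}$, changing neither their number nor their indexing.

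Next, fixing a branch, I would consider the function $\tau_p\colon[R,\infty)\to\{-1,0,1\}$ sending $r$ to $1$, $-1$, or $0$ according as $x_p(r)=\Gamma_p\cap\partial B_{a,r}$ is a local maximum, a local minimum, or neither, of $f|_{\partial B_{a,r}}$; this is defined for every $r\geq R$. The key point is that each of these three alternatives is expressible by a first-order formula over $\R$ — for instance ``$x$ is a local maximum of $f|_{\partial B_{a,r}}$'' reads $\exists\,\ve>0\,\forall\,y\,\big((y\in\partial B_{a,r}\wedge\|y-x\|<\ve)\Rightarrow f(y)\leq f(x)\big)$ — so by the Tarski--Seidenberg theorem the associated function of $(x,r)$ is semialgebraic. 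Composing it with the semialgebraic map $r\mapsto x_p(r)$, whose graph $\{(r,x)\mid x\in\Gamma_p,\ \|x-a\|=r\}$ is semialgebraic because $\Gamma_p$ is a connected component of a semialgebraic set, shows that $\tau_p$ is semialgebraic; hence it is continuous outside a finite subset of $[R,\infty)$, and being $\{-1,0,1\}$-valued it is therefore constant on $[R(p),\infty)$ for some $R(p)\geq R$. Taking $R$ to be the maximum of $R(p)$ over the finitely many branches then makes the property $\mathcal P$ of $f|_{\partial B_{a,r}}$ constant along every $\Gamma_p$, which is the claim.

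The step that requires care is the choice of $\tau_p$: the naive approach of tracking the signature of the Hessian of $f|_{\partial B_{a,r}}$ at $x_p(r)$ breaks down at the degenerate critical points of Remark~\ref{remark24} — the points at which $f|_{\partial B_{a,r}}$ is locally constant — where the Hessian carries no information, whereas the trichotomy ``local maximum / local minimum / neither'' is well defined at every critical point and changes only finitely often along $\Gamma_p$ by the semialgebraicity argument. Beyond this, the only things to verify — finiteness of the set of branches and the compatibility of enlarging $R$ with the branch structure — are immediate from the product decomposition of $\Gamma\setminus\Int B_{a,R}$ recalled in the first step.
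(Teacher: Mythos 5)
Your proof is correct and follows essentially the same route as the paper: both arguments express the trichotomy ``local maximum / local minimum / neither'' by a first-order formula, invoke the Tarski--Seidenberg theorem to get semialgebraicity, and then conclude eventual constancy along each (of the finitely many) tangency branches from the finiteness properties of semialgebraic subsets of a curve, finally taking a maximum over the branches. The only difference is packaging—you track a semialgebraic function $\tau_p$ of the radius, while the paper intersects the semialgebraic sets $V_{\mathcal P}\subset\R^n$ with each $\Gamma_p$—which is not a substantive distinction.
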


\begin{proof}
For each property $\mathcal P$, define the subset $V_{\mathcal{P}}$ of $\mathbb{R}^n$ by
\[
   V_{\mathcal{P}}= \{x\in \mathbb{R}^n \mid \text{$x$ is a $\mathcal{P}$ point of $f|_{\partial B_{a,r_a(x)}}$ for $r_a(x)=\|x-a\|\geq R$}\}.
\]
We will show that, for each $\mathcal{P}$, the set $V_{\mathcal{P}}$ is a semi-algebraic set. 
If $\mathcal P$ is local maximum, the set $V_{\mathcal{P}}$ is represented in terms of the first-order formulas as follows (for the definitions of first-order formulas, see \cite{BCR,HP}):
\[
   V_{\mathcal P}= \{x\in \R^n \mid \exists\ve\in \R\, ((y\in \R^n, \|y\|=\|x\|, \|y-x\|<\ve) \Rightarrow f(y)\leq f(x))\}.
\]
Hence, it implies from the Tarski-Seidenberg Theorem (see \cite[Proposition 2.2.4]{BCR} or \cite[Theorem 1.6]{HP}) that $V_{\mathcal P}$ is a semialgebraic set.  
The set $V_{\mathcal P}$ for $\mathcal P$ being local minimum is also semialgebraic by a similar argument.
If $\mathcal P$ is neither local maximum nor local minimum, then the set $V_{\mathcal P}$ is the complement of the above two semialgebraic sets. Therefore it is also semialgebraic.

Now, $\Gamma_p\cap V_{\mathcal{P}}$ is a semialgebraic subset of a curve for each $\mathcal P$.
Hence we can choose $R>0$ sufficiently large so that 
each $\Gamma_p$ is contained in some of $V_{\mathcal{P}}$.
\end{proof}

\subsection{Choice of the radius $R$}\label{sec24}

Define the set $K_\infty(f)$ by
\[
\begin{split}
 K_\infty(f)=\{t\in\R\mid \; & \text{there exists a sequence $\{x_k\}$ in $\R^n$ such that $\|x_k\|\to\infty$, }  \\
 &\text{$f(x_k)\to t$, and $\|x_k\|\,\|\grad f(x_k)\|\to 0$ as $k\to\infty$}\}.
\end{split}
\]
Note that $K_\infty(f)$ is a finite set and satisfies $T_\infty(f)\subset K_0(f)\cup K_\infty(f)$.
We choose a generic point $a\in \mathbb{R}^n$ as in Lemma \ref{lemma23} with respect to the set $K=K_{\infty}(f)\setminus K_0(f)$.
Choose an open interval $I_{\lambda}$ for each $\lambda\in K$ so that
$I_{\lambda}\cap I_{\lambda'}=\emptyset$ for $\lambda\ne \lambda' \in T_\infty(f)$.
We choose the radius $R>0$ sufficiently large so that the following properties hold:
\begin{itemize}
\item[(i)] $\Gamma\setminus \Int B_{a,R}$ is homeomorphic to $(\Gamma\cap \partial B_{a,R})\times [0,1)$ and, for each $p\in (\Gamma\setminus\Sing(f))\cap \partial B_{a,R}$,
\[
  \Gamma_p\cap \bigcup_{\lambda\in T_\infty(f)}f^{-1}(\lambda)=\emptyset.
\]
\item[(ii)] $R>0$ satisfies the condition in Lemma~\ref{lemma25}.
Since the center $a$ is chosen as in Lemma~\ref{lemma23}, 
the property ``neither local maximum nor local minimum'' for tangency branches is replaced by ``saddle''.
\item[(iii)] For each $\lambda \in K$, each connected component $Y$ of $f^{-1}(\lambda)\setminus \Int B_{a,r}$ intersects $\partial B_{a,r}$ transversely for any $r\geq R$. In particular, $Y$ is diffeomorphic to $(Y\cap \partial B_{a,r})\times [0,1)$ for any $r\geq R$.
\item[(iv)] $\{f(x)\mid x\in \Gamma_p\} \subset I_{\lambda_p}$ holds for any $p\in(\Gamma\setminus \Sing(f))\cap\partial B_{a,R}$.
\end{itemize}

In the following sections, we always assume that the radius $R>0$ is sufficiently large so that these properties hold.

\subsection{Examples}\label{sec25}

We give two examples of polynomial functions $f:\R^3\to\R$ of the form $f(x,y,z)=g(x,y)$, where $g(x,y)$ is a polynomial function of two variables.

\begin{ex}\label{ex27}
Let $g:\R^2\to\R$ be the following polynomial function:
\[
g(x,y)=2y^5+4xy^4+(2x^2-9)y^3-9xy^2+12y.
\]
This example is given in~\cite[Example 3.4]{tz}. The shapes of fibers around the infinity is studied in~\cite{cp} explicitly, which is given as in Figure~\ref{fig1}. There are eight tangency branches, four of which are on the right-hand side and the other four are on the left-hand side. 
The arrow on each tangency branch represents the direction in which the value of $f$ increases.
For example, for the right-top tangency branch $\Gamma_{p_1}$, we have
$g\searrow 0$ along $\Gamma_{p_1}$, $p_1$ is a local minimum of $g|_{B_{a,r_a(p_1)}}$, and it is a local minimum of $r_a|_{g^{-1}(g(p_1)}$, where $r_a(x)=\|x-a\|$. This function has no vanishing component at $\infty$.

\begin{figure}[htbp]
\includegraphics[scale=0.9, bb=130 524 544 712]{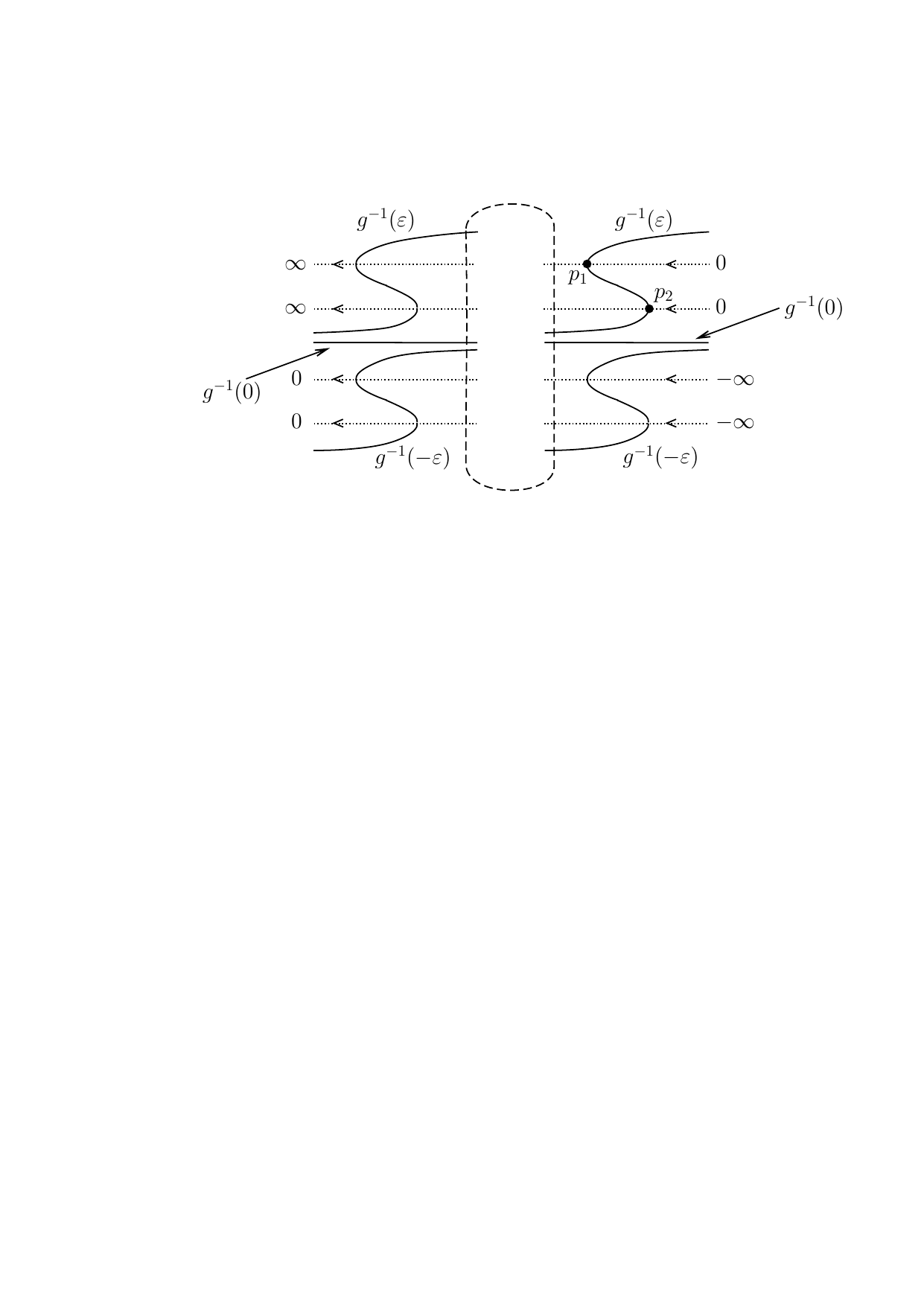}
\caption{Fibers around the infinity in Example~\ref{ex27}. The oriented dotted curves are tangency branches at $\infty$.}
\label{fig1}
\end{figure}

Let $f:\R^3\to\R$ be a polynomial function given by $f(x,y,z)=g(x,y)$. 
In~\cite[Example 3.4]{tz}, the function $g$ is obtained from $h(x,y)=y(2x^2y^2-9xy+12)$ 
as $g(x,y)=h(x+y,y)$. From this form, we can see that $g^{-1}(0)\cap \partial B_{a,R}$ is given by $\{y=0\}\cap \partial B_{a,R}$, which is a connected, simple closed curve on the $2$-sphere $\partial B_{a,R}$. 
The complement of this curve in $\partial B_{a,R}$ consists of two open disks. 
We denote the one where $y$ is positive by $\Omega_1$ and the other by $\Omega_2$.
By choosing the center $a$ of $B_{a,R}$ on $z=0$, we may assume that all tangency branches in Figure~\ref{fig1} are on the plane $z=0$.
Then, for example, the point $p_1$ is local minimum of $f|_{B_{a,r_a(p_1)}}$ and also local minimum of $r_a|_{f^{-1}(f(p_1)}$, where $r_a(x)=\|x-a\|$. This is in the case~(3) of Remark~\ref{rem25}. The index is $\Ind_{p_1}(X_{a,R})=1$. 
On the other hand, the singularity of $r_a|_{f^{-1}(f(p_2)}$ on the tangency branch $\Gamma_{p_2}$ passing through the point $p_2$ in the figure becomes a saddle, and therefore its index is $\Ind_{p_2}(X_{a,R})=-1$. 
The union $\Gamma^{(0)}$ of tangency branches at $\infty$ of $f$ along which either $f\searrow 0$ or $f\nearrow 0$
has
no other tangency branch passing through the region $\Omega_1$.
Hence we have
\[
\begin{split}
   \Ind(\lambda,\Omega_1)&=\sum_{p\in\Gamma^{(0)}\cap\Omega_1} \Ind_p(X_{a,R}) \\
   &=\Ind_{p_1}(X_{a,R})+\Ind_{p_2}(X_{a,R}) \\
   &=1+(-1)=0.
\end{split}
\]
By the same observation, we have $\Ind(\lambda,\Omega_2)=0$. 
Then, by Theorem~\ref{thm1}, we can conclude that $0$ is a typical value at $\infty$ of $f$.
\end{ex}

\begin{ex}\label{ex28}
Let $g:\R^2\to\R$ be the following polynomial function:
\[
g(x,y)=x^2y^3(y^2-25)^2+2xy(y^2-25)(y+25)-y^4-y^3+50y^2+51y-575.
\]
This example is given in~\cite[Example 3.1]{tz}. The shapes of fibers around the infinity is studied in~\cite{cp} explicitly after replacing $x$ by $x+y$ to avoid vertical tangency at infinity. The fibers are given as in Figure~\ref{fig2}.
There are two component vanishing at $\infty$ when $t$ tends to $0$. 
The word ``cleaving'' means that the point on the tangency branch goes to $\infty$ when $t$ tends to $0$,
so that the curve cleaves locally into two curves. There are two cleaving curves.

\begin{figure}[htbp]
\includegraphics[scale=0.9, bb=130 479 587 712]{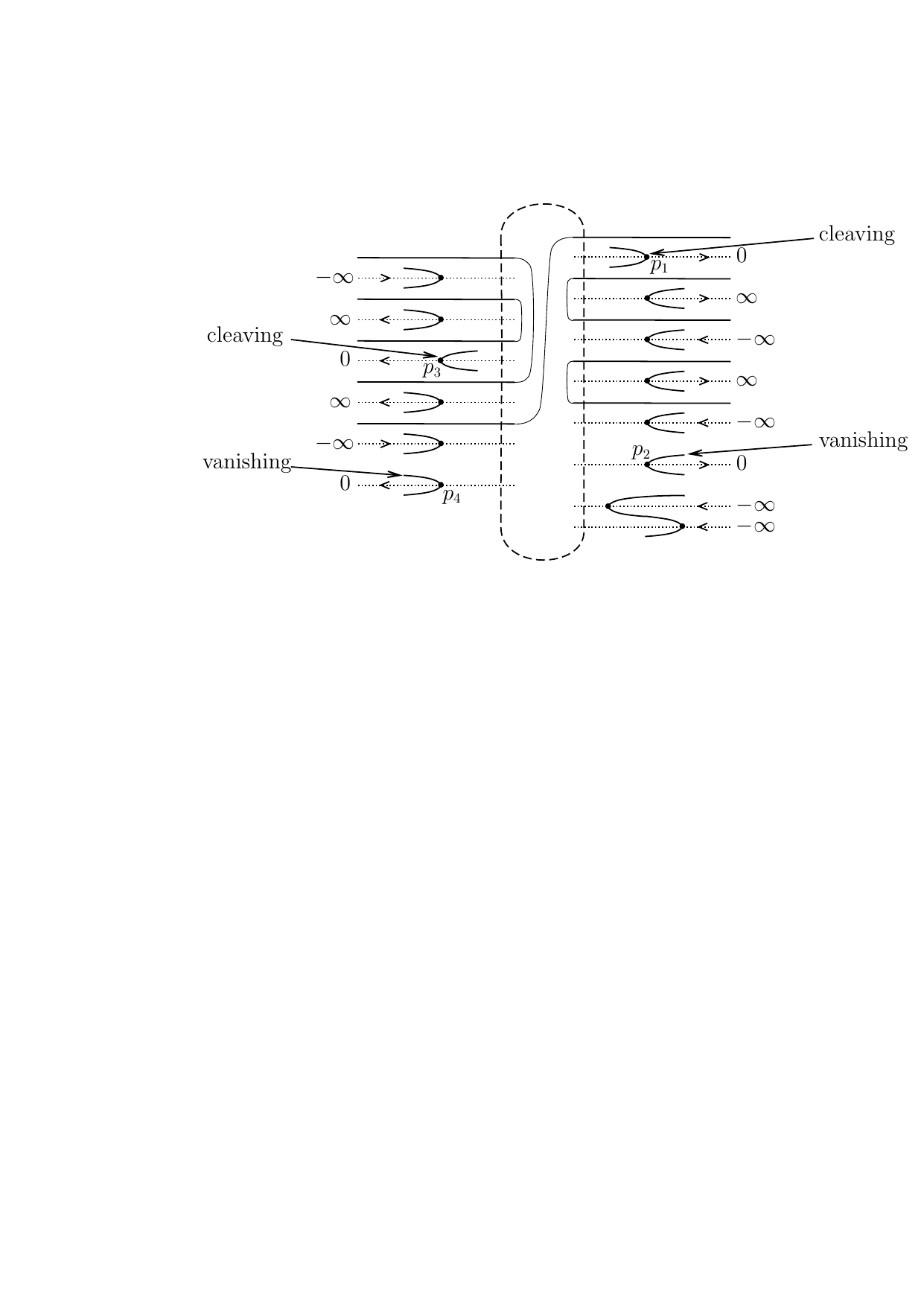}
\caption{Fibers around the infinity in Example~\ref{ex28}. The oriented dotted curves are tangency branches at $\infty$. All horizontal solid lines are curves representing $g^{-1}(0)$.}
\label{fig2}
\end{figure}

Let $f:\R^3\to\R$ be a polynomial function given by $f(x,y,z)=g(x,y)$. 
Using Mathematica, we can see that the curve of $g^{-1}(0)$ inside the dotted circle is as shown in Figure~\ref{fig2}.
Note that it is explained in~\cite[Example 3.1]{tz} that
if $|t|$ is sufficiently small then $g^{-1}(t)$ is a disjoint union of
five non-compact connected components.
Thus, the curves $f^{-1}(0)\cap\partial B_{a,R}$ on the sphere $\partial B_{a,R}$ becomes as shown in Figure~\ref{fig2-2}. There are
five
circles.
Let $X_{a,R}$ be the gradient vector field of $f|_{\partial B_{a,R}}$. 
We have $\Ind_{p_1}(X_{a,R})=\Ind_{p_3}(X_{a,R})=-1$ and $\Ind_{p_2}(X_{a,R})=\Ind_{p_4}(X_{a,R})=1$.
On the region $\Omega_1$ depicted in the figure, we have
\[
\begin{split}
   \Ind(0,\Omega_1)&=\Ind_{p_1}(X_{a,R})+\Ind_{p_2}(X_{a,R})+\Ind_{p_4}(X_{a,R}) \\
   &=(-1)+1+1=1\ne 0.
\end{split}
\]
Hence $0$ is an atypical value at $\infty$ of $f$ by Theorem~\ref{thm1}.
We can get the same conclusion from the region $\Omega_2$ depicted in the figure 
since $\Ind(0,\Omega_2)=\Ind_{p_3}(X_{a,R})=-1\ne 0$.
\begin{figure}[htbp]
\includegraphics[scale=0.9, bb=180 580 363 711]{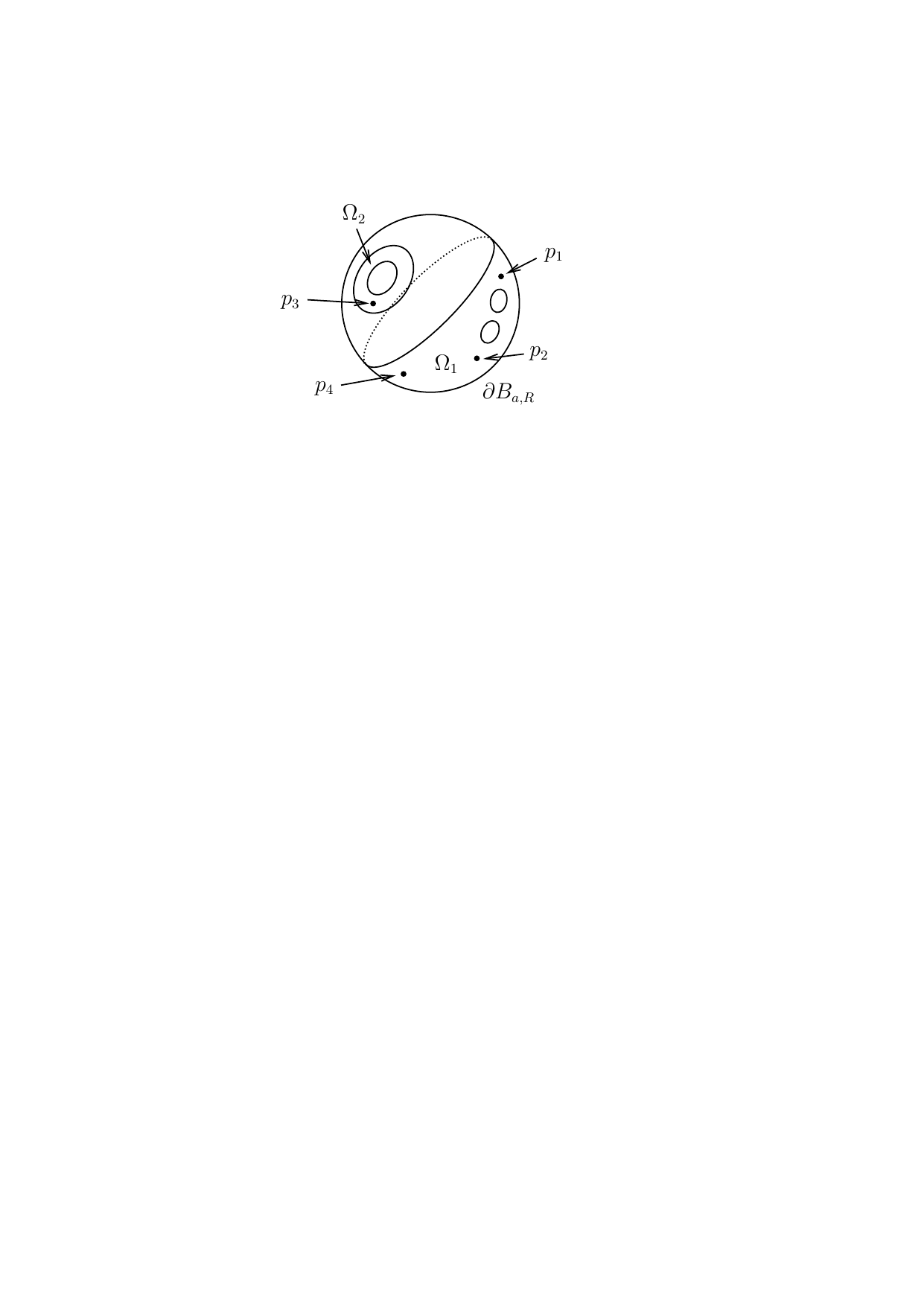}
\caption{The curves $f^{-1}(0)\cap\partial B_{a,R}$ on the sphere $\partial B_{a,R}$.}
\label{fig2-2}
\end{figure}
\end{ex}



\section{A characterization of vanishing component at infinity}\label{sec3}

Let $f:\R^n\to\R$ be a polynomial function.
Hereafter we omit $a$ in the suffix of $B_{a,r}$ for $r>0$ and denote it by $B_r$ for simplicity.
Each critical point $p\in \partial B_R$ of $f|_{\partial B_R}$ not lying on $\Sing(f)$ is a point in $(\Gamma\setminus \Sing(f))\cap \partial B_R$. Hence it has a tangency branch $\Gamma_p$.

\begin{thm}\label{thm31}
Suppose $n\geq 2$ and $\lambda\in T_{\infty}(f)\setminus K_0(f)$. 
There is a vanishing component at $\infty$ when $t$ tends to $\lambda$ with $t>\lambda$  (resp. $t<\lambda$) if and only if there exists a local minimum (resp. maximum) point $p\in \partial B_R$ of $f|_{\partial B_R}$ with $f\searrow \lambda$ (resp. $f\nearrow \lambda$) along $\Gamma_p$ such that the intersection of the connected component of $f^{-1}(f(p))$ containing $p$ with $\partial B_R$ consists of isolated points.
\end{thm}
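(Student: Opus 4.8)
The plan is to reduce at once to the case $t>\lambda$: replacing $f$ by $-f$ leaves $\Gamma$, $\Sing(f)$ and the tangency branches unchanged, while interchanging $\nearrow$ with $\searrow$, local minima of $f|_{\partial B_R}$ with local maxima of $(-f)|_{\partial B_R}$, and ``$t\to\lambda$ with $t<\lambda$'' with ``$s\to-\lambda$ with $s>-\lambda$''. The engine, used throughout, is the Morse theory of the distance $r_a(x)=\|x-a\|$ restricted to the fibers: by the choice of $a$ in Lemma~\ref{lemma23}, $r_a|_{f^{-1}(t)}$ is Morse for every $t\in I_\lambda\setminus\{\lambda\}$; its critical points outside $B_R$ are precisely the points $x_q(r)$ on the tangency branches at which $f=t$, with index governed by the type $\mathcal P$ of $q$ for $f|_{\partial B_R}$ through Remark~\ref{rem25} and Lemma~\ref{lemma25}. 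Since the values in $I_\lambda\setminus\{\lambda\}$ are typical at $\infty$, the components of $f^{-1}(t)$ neither merge nor split as $t$ runs through such values; and, as $t$ decreases past a value $f(q)$ belonging to a branch $\Gamma_q$ with $f\searrow\lambda$ and $q$ a local minimum of $f|_{\partial B_R}$, the only change of $f^{-1}(t)\cap\overline{B_R}$ is that a ``cap'' inside $B_R$ near $q$ (present for $t>f(q)$, since $f$ increases toward the interior there) detaches from $\partial B_R$ into an exterior ``valley'' of $r_a$ (present for $t<f(q)$). Thus as $t$ decreases over a small interval $(\lambda,\lambda+\delta)$ the set $f^{-1}(t)\cap\overline{B_R}$ only loses handles; in particular, once a component of $f^{-1}(t)$ is confined to $\{r_a>R\}$ it stays confined as $t\downarrow\lambda$, and its $r_a$-minimum (attained at an exterior valley on an $f\searrow\lambda$ branch, at the radius where $f=t$) tends to $\infty$. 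I would isolate this as the technical lemma and then read off both directions.

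For ``vanishing component $\Rightarrow$ such a point $p$'', take $t_k\to\lambda^+$ and components $Y_k$ of $f^{-1}(t_k)$ with $\rho_k:=\inf\{\|x-a\|\mid x\in Y_k\}\to\infty$. As $r_a$ is proper and $Y_k$ is closed, $r_a|_{Y_k}$ attains its minimum at some $q_k$, a local minimum of $r_a|_{f^{-1}(t_k)}$, hence $q_k\in\Gamma$ with $\|q_k-a\|\to\infty$. Passing to a subsequence, the $q_k$ lie on a single tangency branch, which has $f\searrow\lambda$ because $t_k>\lambda$ and $f$ is monotone along it. Now among the index-$0$ critical points of $r_a|_{Y_k}$ for $k$ large (all on $f\searrow\lambda$ branches, since $t_k>\lambda$), choose one lying on the branch $\Gamma_{q_0}$ whose inner value $f(q_0)$ is smallest; then $f\searrow\lambda$ along $\Gamma_{q_0}$, and by Remark~\ref{rem25}(3) and Lemma~\ref{lemma25} the point $q_0$ is a local minimum of $f|_{\partial B_R}$. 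Letting $t$ increase to $f(q_0)$, the tracked component (no merging occurs) stays confined to $\{r_a>R\}$ until $t=f(q_0)$, where it becomes the component $C_{q_0}$ of $f^{-1}(f(q_0))$ through $q_0$, meeting $\partial B_R$ exactly at the finitely many valley-bottoms that reach radius $R$ at level $f(q_0)$ — these are non-degenerate minima of $r_a|_{C_{q_0}}$ by Lemma~\ref{lemma23}. Hence $C_{q_0}\cap\partial B_R$ is finite, so isolated, and $q_0$ is the desired point.

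For the converse, given $p$ with the three properties I first promote ``$C_p\cap\partial B_R$ isolated'' to ``$C_p\subseteq\{r_a\ge R\}$''. When $n\ge3$, deleting the isolated set $C_p\cap\partial B_R$ from the hypersurface $C_p$ leaves it connected, so it lies entirely in one of the disjoint open sets $\Int B_R$ and $\R^n\setminus\overline{B_R}$; and since $f\searrow\lambda$ along $\Gamma_p$ the level set $C_p$ opens outward at $p$, forcing $C_p\subseteq\{r_a\ge R\}$. (For $n=2$ the intersection condition is automatic and the assertion is the planar criterion of~\cite{cp}.) Then $C_p$ meets $\partial B_R$ only at local minima of $f|_{\partial B_R}$ lying on $f\searrow\lambda$ branches, $C_p$ carries no interior cap, and by the technical lemma the component $Y^{(p)}_t$ of $f^{-1}(t)$ through $\Gamma_p\cap f^{-1}(t)$ (a single point, by monotonicity of $f$ along $\Gamma_p$) is, for $t$ just below $f(p)$, confined to $\{r_a>R\}$; it remains confined and vanishes to $\infty$ as $t\downarrow\lambda$. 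This is a vanishing component at $\infty$ as $t\to\lambda^+$.

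The main obstacle is exactly the technical lemma of the first paragraph: a careful description of the fibers $f^{-1}(t)$ in and near $\overline{B_R}$ as $t$ varies over typical values close to $\lambda$ — that components do not merge or split, that the changes of $f^{-1}(t)\cap\overline{B_R}$ are precisely the detachments of interior caps into exterior valleys described above (read off from the local normal forms at the tangency branches via Remark~\ref{rem25} and Lemma~\ref{lemma25}), and that each such move is monotone in the stated direction. Granting this, both implications are short and use only Lemma~\ref{lemma23}, Remark~\ref{rem25}, Lemma~\ref{lemma25}, and the properties of $a$ and $R$ fixed in Section~\ref{sec24}.
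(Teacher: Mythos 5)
Your overall strategy is recognizably the same as the paper's (track the minima of $r_a$ on the fibers along tangency branches, pick the branch whose value at $\partial B_R$ is smallest, and confine the relevant component outside the ball), and some individual steps are sound: the reduction $f\mapsto -f$, the observation that minima of $r_a|_{Y_k}$ at levels $t_k>\lambda$ sit on branches with $f\searrow\lambda$ (via Remark~\ref{rem25} and property~(iv)), and the promotion of ``$C_p\cap\partial B_R$ isolated'' to ``$C_p\subset\R^n\setminus\Int B_R$'' for $n\geq 3$. But the proof has a genuine gap: everything that is actually hard is delegated to the unproven ``technical lemma,'' which you yourself flag as the main obstacle. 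That lemma, as stated, is both unestablished and doubtful: on the interval $(\lambda,f(q_0)]$ over which you let $t$ increase, there may be values $f(q')$ of other tangency points of all types (saddles, local maxima, branches with $f\nearrow$), so it is not true that ``the only change of $f^{-1}(t)\cap\overline{B_R}$ is a cap detaching into an exterior valley'' or that the part inside the ball ``only loses handles''; and the claim that the tracked component stays confined to $\{r_a>R\}$ up to $t=f(q_0)$ and there meets $\partial B_R$ only in isolated points is precisely the conclusion of the forward direction, not something that follows from Lemma~\ref{lemma23} (which only gives the Morse property of $r_a|_{f^{-1}(t)}$ for $t\neq\lambda$ and does not exclude an $(n-2)$-dimensional transverse intersection of the limiting component with the sphere).

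For comparison, the paper closes exactly these holes by different means. In the ``only if'' direction it does not track a single component upward in $t$; it fixes the region $\Omega=H\cap\partial B_R$ cut out by $f^{-1}(\lambda)$, takes $\delta=\min_{x\in S_\Omega}f(x)$ over \emph{all} local minima in $\Omega\cap\Gamma^{(\lambda)}$, and derives a contradiction from the assumption that $Y_\delta\cap\partial B_R$ is not isolated: either (Case 1) a non-isolated intersection forces another local minimum $q$ with $\lambda<f(q)<\delta$, which is excluded by the minimality of $\delta$ together with property~(iv) (disjointness of the intervals $I_{\lambda_q}$), or (Case 2) $Y_{[\lambda,\delta]}$ meets $f^{-1}(\lambda)$, contradicting the vanishing hypothesis. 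In the ``if'' direction the confinement you want from the technical lemma is obtained by a separation argument: $\Gamma_p\setminus\{p\}$ and $\partial B_R\setminus Z_\delta$ lie in different components of $\R^n\setminus Z_\delta$, and a path in $Z_{(\lambda,\delta]}$ joining them could be isotoped off $Z_\delta$ using the triviality of $f$ over $(\lambda,\delta]$, a contradiction; then the minimum of $r_a$ on $Z_t$ sits on a tangency branch and must escape to infinity by property~(i). If you want to salvage your write-up, you should replace the technical lemma by arguments of this kind (minimality over the whole region plus property~(iv), and an explicit separation/isotopy argument), since as it stands the proposal assumes the key structural facts rather than proving them. Your treatment of $n=2$ by citing the planar criterion is also only a pointer, not a proof within this framework.
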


\begin{proof}
We first prove the ``only if'' assertion. We only prove the assertion in the case where $t$ tends to $\lambda$ with $t>\lambda$.
The proof for the other case is similar.
Let $\{Y_t\}$ be a continuous family of connected components of $f^{-1}(t)$ that vanishes at $\infty$ when $t$ tends to $\lambda$.
Tangency branches intersecting $\{Y_t\}$ are contained in a connected component $H$ of 
$\R^n\setminus (f^{-1}(\lambda)\cup\text{Int} B_R)$ by the property~(i)
about the choice of the radius $R$ in Section~\ref{sec24}.
Remark that $H$ is possibly $\R^n\setminus \text{Int} B_R$.
Set $\Omega=H\cap \partial B_R$.
Either $\Omega=\partial B_R$, or $\Omega\subset \partial B_R$ is bounded by a finite number of circles belonging to $f^{-1}(\lambda)\cap \partial B_R$.
Since $t$ tends to $\lambda$ with $t>\lambda$, we have $\Omega\subset \{x\in\partial B_R\mid f(x)>\lambda\}$. 
Let $S_\Omega$ denote the set of local minimum points of $f|_{\partial B_R}$ in $\Omega\cap\Gamma^{(\lambda)}$, where $\Gamma^{(\lambda)}$ is the union of tangency branches at $\infty$ of $f$ along which either $f\nearrow\lambda$ or $f\searrow\lambda$. 
By the definition of a vanishing component at $\infty$ in Section~\ref{sec21} and Remark~\ref{rem25}~(1),
the function $r_a(x)=\|x-a\|$ restricted to $f^{-1}(t)$ has a local minimum point $y$ on $\Omega\cap \Gamma^{(\lambda)}$. Since $t>\lambda$, it satisfies that $f\searrow \lambda$ along $\Gamma_y$.
Hence, by Remark~\ref{rem25}~(3), $y$ is a local minimum point of $f|_{\partial B_R}$, that is, $y$ is a point in $S_\Omega$. In particular,  $S_\Omega$ is non-empty.

Set $\delta=\min_{x\in S_\Omega} f(x)$
and let $p$ be a point in $S_\Omega$ such that $f(p)=\delta$ and $f\searrow \lambda$ along $\Gamma_p$. Let $(\lambda,\delta]$ be the range of the parameter $t$ of $Y_t$. 
We will show that $Y_\delta\cap \partial B_R$ consists of isolated points. 

Assume for a contradiction that $Y_\delta\cap \partial B_R$ is not isolated. 

\begin{claim}\label{claim0-2}
$Y_\delta\cap \Omega$ is not isolated. 
\end{claim}

\begin{proof}
Assume that $Y_\delta\cap \Omega$ is isolated. Then, all points in $Y_\delta\cap\Omega$ are local minima of $r_a|_{Y_\delta}:Y_\delta\to\R$, where $r_a(x)=\|x-a\|$. The inequality $\lambda<\delta$ implies that 
$Y_\delta\cap f^{-1}(\lambda)=\emptyset$. Hence $Y_\delta\subset H$, see Figure~\ref{fig3}. This inclusion implies
$Y_\delta\cap \partial B_R=Y_\delta\cap \Omega$. 
However, the right-hand side is isolated while the left-hand is not. This is a contradiction.
\end{proof}

\begin{figure}[htbp]
\includegraphics[scale=1, bb=148 622 389 712]{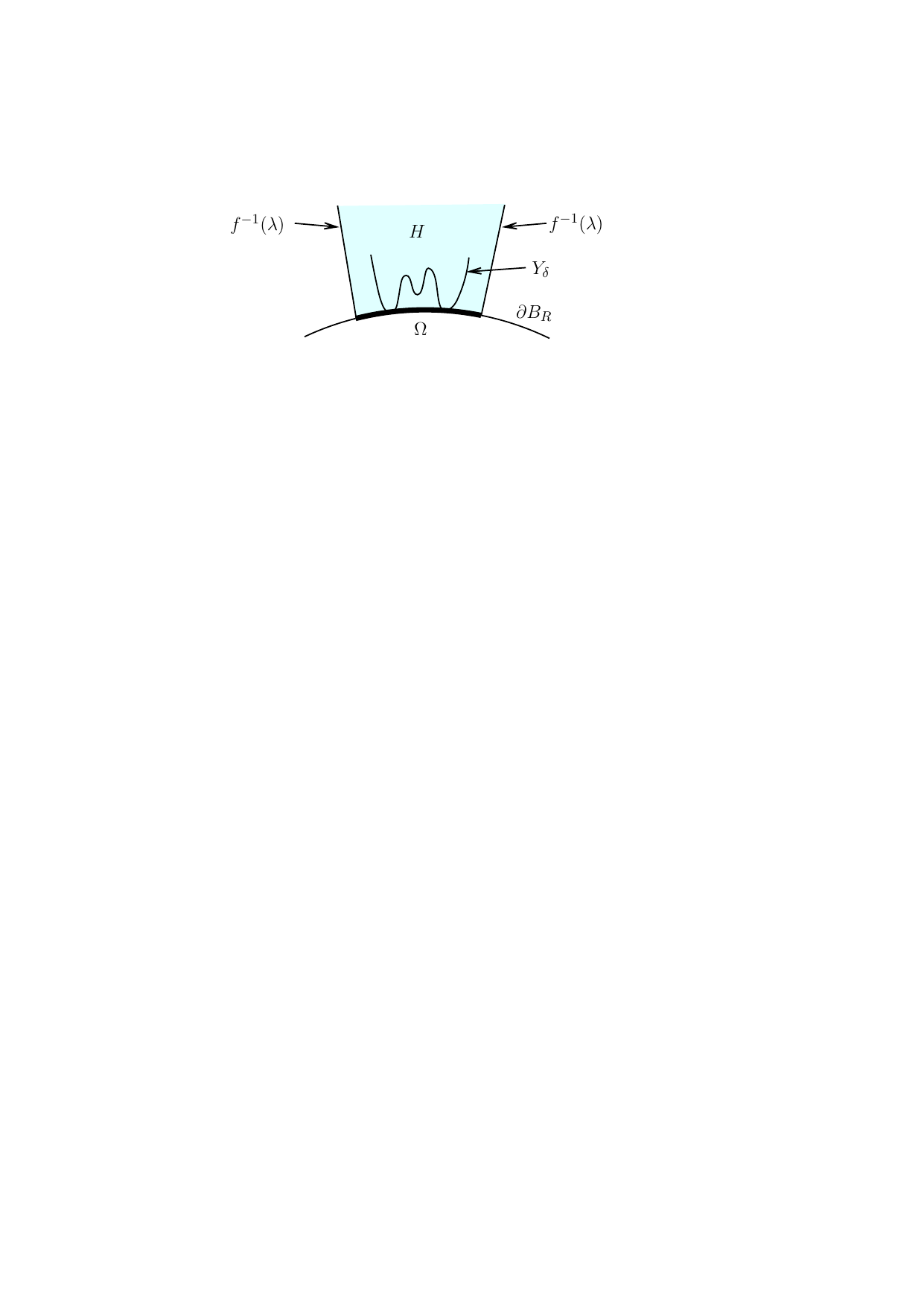}
\caption{$Y_\delta\cap f^{-1}(\lambda)=\emptyset$ implies $Y_\delta\subset H$.}
\label{fig3}
\end{figure}

We continue the proof of Theorem~\ref{thm31}. 
Let $Y_{[\lambda,\delta]}$ be the connected component of $f^{-1}([\lambda, \delta])$ containing $p$. 
There are two cases:

Case 1: $Y_{[\lambda,\delta]}\cap f^{-1}(\lambda)=\emptyset$ (cf.~Figure~\ref{fig4}). 
Set $\Omega_{[\lambda,\delta]}=Y_{[\lambda,\delta]}\cap \bar \Omega$,
where $\bar\Omega$ is the closure of $\Omega$ in $\partial B_R$. 
Since $Y_\delta\cap \Omega$ is not isolated by Claim~\ref{claim0-2}, 
$Y_\delta\cap \Omega$ has a connected component $C$ of dimension at least $1$.
A point in $\Omega$ at which $Y_\delta$ is tangent to $\Omega$ belongs to a tangency branch at $\infty$ of $f$ and hence it is isolated in $\Omega$. In particular, it cannot be in $C$. 
This means that $Y_\delta$ and $\Omega$ intersect along $C$ transversely. 
Therefore, since $f$ is continuous on $\bar\Omega$, $Y_{[\lambda,\delta]}\cap \Omega$ has a connected component $C$ of dimension $n-1\geq 1$. 
This set $C$ is a compact subset of $\Omega$.
Due to a generic choice of the center $a$ in Lemma~\ref{lemma23},
the restriction of $f$ to $C$ cannot be a constant function.
Hence $f$ is not a constant function on $\Omega_{[\lambda,\delta]}$.
Since $\partial\bar\Omega\subset f^{-1}(\lambda)$ (possibly $\partial\bar\Omega=\emptyset$) and 
$Y_{[\lambda,\delta]}\cap f^{-1}(\lambda)=\emptyset$,
we have $\partial \Omega_{[\lambda,\delta]}\subset f^{-1}(\delta)$ (possibly $\partial \Omega_{[\lambda,\delta]}=\emptyset$).
Hence, there exists a local minimum point $q$ of $f|_{\partial B_R}$ in the interior of $\Omega_{[\lambda,\delta]}$ with $\lambda<f(q)<\delta$.

\begin{figure}[htbp]
\includegraphics[scale=1.16, bb=151 588 392 710]{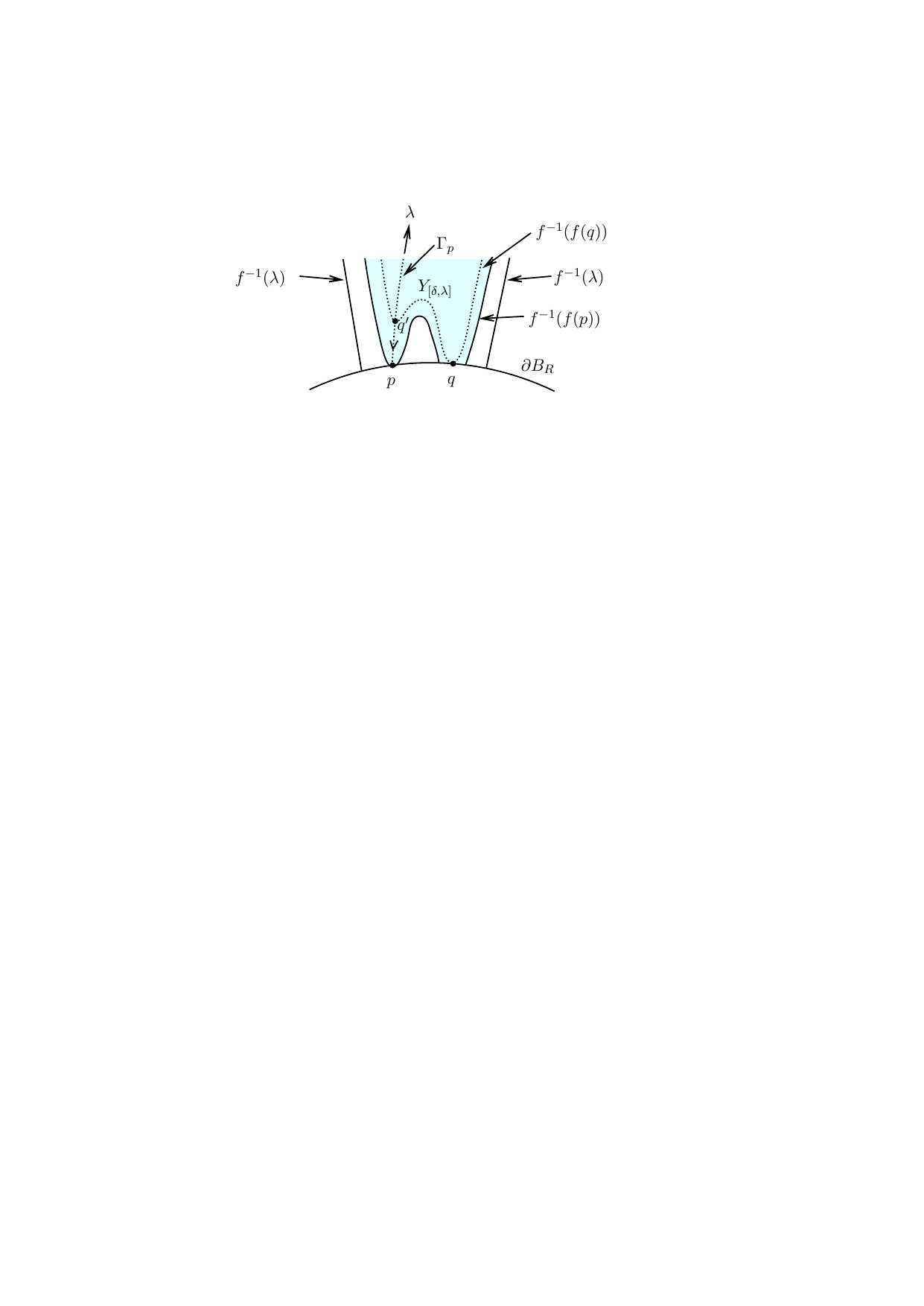}
\caption{A schematic picture for the proof in Case 1.}
\label{fig4}
\end{figure}

Since $\lambda<f(q)<\delta=f(p)$, there exists a point $q'$ on $\Gamma_p$ such that $f(q)=f(q')$.
If $f\searrow \lambda_q$ along $\Gamma_q$ with $\lambda_q\ne \lambda$, then
the two sets $\{f(x)\mid x\in\Gamma_p\}$ and $\{f(x)\mid x\in \Gamma_q\}$ should be disjoint
by the property~(iv). 
However $f(q)=f(q')$ is a common element of these two sets. 
If $f\searrow \lambda$ along $\Gamma_q$, then $q\in S_\Omega$.
However, this and $f(q)<f(p)$ contradict $f(p)=\delta=\min_{x\in S_\Omega}f(x)$.
Thus, in either case, a contradiction arises.

Case 2: $Y_{[\lambda,\delta]}\cap f^{-1}(\lambda)\neq \emptyset$. Take one point  $q\in Y_{[\lambda,\delta]}\cap f^{-1}(\lambda)$, then $q$ belongs to the connected component of $f^{-1}([\lambda,\ve])\setminus \text{Int\,}B_{a,R}$ contained in $Y_{[\lambda,\delta]}$ for any $\lambda<\ve<\delta$. This contradicts the fact that $\{Y_t\}$ vanishes at $\infty$ when $t$ tends to $\lambda$.

Next we prove the ``if'' assertion. 
Assume that there exists a local minimum point $p\in \Gamma_{\lambda}\cap\partial B_R$ of $f|_{\partial B_R}$ with $f\searrow \lambda$ along $\Gamma_p$ such that the intersection of the connected component $Z_{f(p)}$ of $f^{-1}(f(p))$ containing $p$ with the sphere $\partial B_R$ consists of isolated points. 
Since $p$ is a local minimum point of $f|_{\partial B_R}$, 
$p$ is also a local minimum point of $r_a|_{Z_{f(a)}}$ by Remark~\ref{rem25}~(3). 
This and the isolatedness of $Z_{f(p)}\cap \partial B_R$ imply that $Z_{f(p)}\subset \R^n\setminus \Int B_R$. 

Put $\delta=f(p)$. 
Let $Z_{(\lambda,\delta]}$ be the connected component of $f^{-1}((\lambda, \delta])$ containing $p$. 
We will show that $Z_{(\lambda,\delta]}\cap \partial B_R= Z_\delta\cap \partial B_R$. 
It is easy to see that 
the 
two connected components $\Gamma_p\setminus\{p\}$ and $\partial B_R\setminus Z_\delta$ are subsets of different connected components of $\R\setminus Z_\delta$. Assume that there exists a point $x\in Z_{(\lambda,\delta]}\cap (\partial B_R\setminus Z_\delta)$. Choose a point $y\in \Gamma_p\setminus\{p\}$. Note that the values $f(x)$ and $f(y)$ are in $(\lambda,\delta)$. Since $Z_{(\lambda,\delta]}$ is connected, there exists a path in $Z_{(\lambda,\delta]}$ connecting $x$ and $y$. Furthermore, since $f$ is a trivial fibration on $(\lambda,\delta]$, we can isotope this path so that it is in $Z_{(\lambda,\delta]}\setminus Z_\delta$. However, this is impossible since $x$ and $y$ belong to different connected components of $\R^n\setminus Z_\delta$.
Therefore, $Z_{(\lambda,\delta]}\cap \partial B_R= Z_\delta\cap \partial B_R$. 

Now it follows that for any $t\in (\lambda,\delta)$, the connected component $Z_t$ of $f^{-1}(t)$ intersecting $\Gamma_p$ does not intersect $B_R$. Hence the distance function $r_a|_{Z_t}$ on $Z_t$ attains a minimum value at some point belonging to a tangency branch in $\R^n\setminus \Int B_R$.
Thus, we can find a sequence 
$\{t_k\}$ on $(\lambda,\delta)$ with $\lim_{k\to\infty}t_k=\lambda$ and a point $q\in \Gamma\cap \partial B_R$ such that 
\[
   \min\{r_a(x)\mid x\in Z_{t_k} \}= r_a(q_k),
\]
where $q_k= Z_{t_k}\cap \Gamma_q$. 
The distance $r_a(q_k)$ goes to $\infty$ as $k\to\infty$, otherwise $\Gamma_q$ intersects $f^{-1}(\lambda)$ and this contradicts the property~(i).
Hence $Z_{t_k}$ vanishes at $\infty$ as $k\to\infty$.

The proof for the case where $p$ is a local maximum point is similar.
\end{proof}

\begin{rem}\label{rem32}
Using Theorem~\ref{thm31}, a vanishing component at $\infty$ of $f:\R^n\to\R$ is detected as follows:
\begin{itemize}
\item[(Step 1)] Choose a generic center $a$, calculate all tangency branches, and fix a sufficiently large radius $R>0$ that satisfies the conditions written in Section~\ref{sec24}.
\item[(Step 2)] For each $p\in (\Gamma\setminus \Sing(f))\cap \partial B_R$, calculate $\lambda_p=\lim_{r\to\infty}f(x_p(r))$, where $x_p(r)=\Gamma_p\cap \partial B_r$. Then, make the following lists of finite sets:
\[
\begin{split}
   P_{\rm min}(\lambda)&=\{p\in (\Gamma\setminus \Sing(f))\cap \partial B_R\mid \text{$p$ is local minimum of $f|_{\partial B_R}$ with $f\searrow \lambda$}\} \\
   P_{\rm max}(\lambda)&=\{p\in (\Gamma\setminus \Sing(f))\cap \partial B_R\mid \text{$p$ is local maximum of $f|_{\partial B_R}$ with $f\nearrow \lambda$}\} \\
   \Lambda_{\rm min}&=\{\lambda\in\R\mid P_{\rm min}(\lambda)\ne\emptyset\} \\
   \Lambda_{\rm max}&=\{\lambda\in\R\mid P_{\rm max}(\lambda)\ne\emptyset\}.
\end{split}
\]
\item[(Step 3)] 
For each element $\lambda\in \Lambda_{\rm min}$ (resp. $\lambda\in\Lambda_{\rm max}$), check if there exists $p\in P_{\rm min}(\lambda)$ (resp. $p\in P_{\rm max}(\lambda)$) such that the intersection $f^{-1}(f(p))\cap \partial B_R$ consists of isolated points.
\begin{itemize}
\item [(3-1)]If it exists, then there exists a vanishing component at $\infty$ when $t$ tends to $\lambda$ by Theorem~\ref{thm31}.
\item[(3-2)] If it does not exist, then $\dim f^{-1}(f(p))\cap \partial B_R\geq 1$ for any $p\in P_{\rm min}(\lambda)\cup P_{\rm max}(\lambda)$.
For each $p\in P_{\rm min}(\lambda)\cup P_{\rm max}(\lambda)$, 
calculate all critical values $c_1,\ldots,c_k$ of $r_a(x)=\|x-a\|$ on $f^{-1}(f(p))$ and then choose a real number $R'$ greater than $\max\{R, c_1,\ldots,c_k\}$, see Figure~\ref{fig5}. 
Make a list $L'$ of the connected components of $\partial B_{R'} \setminus f^{-1}(f(p))$
and find a component $\Omega'_p\in L'$ intersecting $\Gamma_p$.
If $f^{-1}(\lambda)\cap \Omega_p'=\emptyset$ then there exists a vanishing component at $\infty$ when $t$ tends to $\lambda$ as shown in the next lemma (Lemma~\ref{lemma32}).
\end{itemize}
\end{itemize}
All vanishing components at $\infty$ are detected by the above steps, which is proved in Lemma~\ref{lemma34} below.
\end{rem}

\begin{figure}[htbp]
\includegraphics[scale=1.16, bb=187 585 372 710]{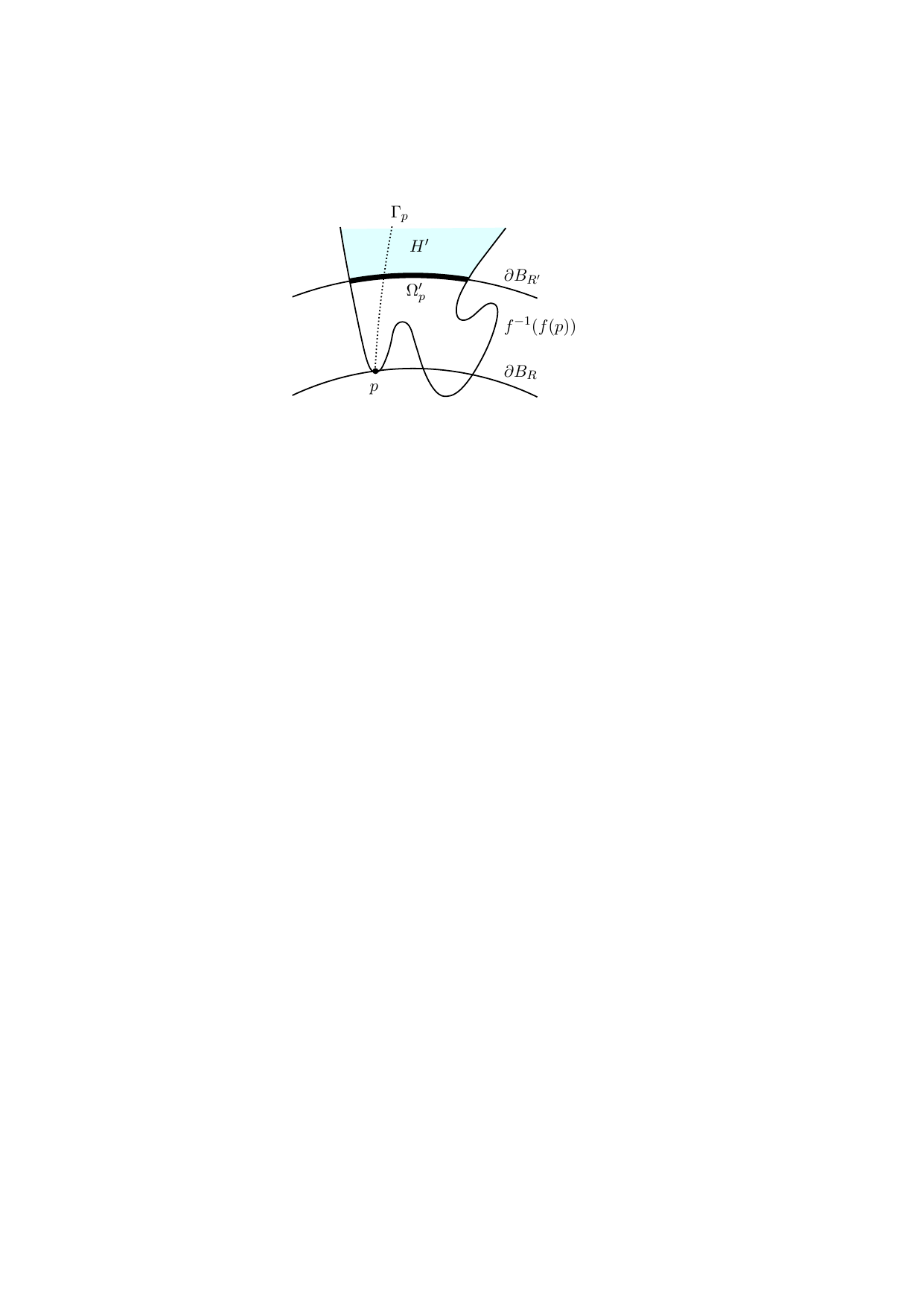}
\caption{A schematic picture for Step (3-2).}
\label{fig5}
\end{figure}

\begin{lemma}\label{lemma32}
If $f^{-1}(\lambda)\cap \Omega_p'=\emptyset$ then there exists a vanishing component at $\infty$ when $t$ tends to $\lambda$.
\end{lemma}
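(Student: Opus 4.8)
The plan is to adapt the ``if'' part of the proof of Theorem~\ref{thm31}. There the hypothesis that $Z_{f(p)}\cap\partial B_R$ is isolated was used to trap a whole cap of nearby fibers strictly outside $B_R$; in the present situation this fails, and the passage to the larger sphere $\partial B_{R'}$ with $R'>\max\{R,c_1,\dots,c_k\}$ is precisely what is designed to repair it. The part of $f^{-1}(f(p))$ that matters will be the piece bounding the region $W\subset\R^n\setminus\Int B_{R'}$ cut off ``over $\Omega_p'$'', and the goal is to show that for $t$ strictly between $\lambda$ and $f(p)$ and close to $\lambda$ some connected component of $f^{-1}(t)$ is trapped inside $W$ — unable to reach $f^{-1}(f(p))$, $\partial B_{R'}$ or $f^{-1}(\lambda)$ — and is therefore forced to recede to infinity as $t\to\lambda$.

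\emph{The region $W$ and the number $\mu$.} I may assume $p\in P_{\min}(\lambda)$, so that $\delta:=f(p)>\lambda$, $f\searrow\lambda$ along $\Gamma_p$, and $t$ will tend to $\lambda$ from above; the case $p\in P_{\max}(\lambda)$ is entirely symmetric. Since $c_1,\dots,c_k$ are the critical values of $r_a$ on $f^{-1}(\delta)$ and $R'$ exceeds all of them as well as $R$, the map $r_a$ restricts to a proper submersion $f^{-1}(\delta)\setminus\Int B_{R'}\to[R',\infty)$; by the same reasoning that establishes property~(iii) in Section~\ref{sec24}, applied to $f^{-1}(\delta)$ and $R'$ (a relative Ehresmann fibration: one deforms the outward radial flow near $f^{-1}(\delta)$ so that it stays tangent to $f^{-1}(\delta)$), one obtains a diffeomorphism of pairs $(\R^n\setminus\Int B_{R'},\,f^{-1}(\delta)\setminus\Int B_{R'})\cong(\partial B_{R'},\,f^{-1}(\delta)\cap\partial B_{R'})\times[R',\infty)$. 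Hence every connected component of $(\R^n\setminus\Int B_{R'})\setminus f^{-1}(\delta)$ meets $\partial B_{R'}$ in exactly one connected component of $\partial B_{R'}\setminus f^{-1}(\delta)$; let $W$ be the component containing $p':=\Gamma_p\cap\partial B_{R'}$, so that $W\cap\partial B_{R'}=\Omega_p'$ and the frontier of $W$ in $\R^n$ is contained in $(f^{-1}(\delta)\setminus\Int B_{R'})\cup\overline{\Omega_p'}$. Since $W$ is connected and $f(p')\in(\lambda,\delta)$, we have $f<\delta$ on $W$ and on $\overline{\Omega_p'}$; and since the frontier of $\Omega_p'$ in $\partial B_{R'}$ lies in $f^{-1}(\delta)$ (where $f=\delta>\lambda$), while $\Omega_p'\cap f^{-1}(\lambda)=\emptyset$ by hypothesis and $\Omega_p'$ is connected, we get $\lambda<f\le\delta$ on the compact set $\overline{\Omega_p'}$. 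Set $\mu:=\min_{\overline{\Omega_p'}}f\in(\lambda,\delta]$.

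\emph{Trapping and recession.} Fix a regular value $t$ of $f$ with $\lambda<t<\mu$; all but finitely many $t$ are regular values, $K_0(f)$ being finite. By monotonicity of $f$ along $\Gamma_p$ (and $t<\mu\le f(p')$) there is $r_t>R'$ with $f(x_p(r_t))=t$, and the arc $\{x_p(r):r\ge R'\}$ is connected, avoids $f^{-1}(\delta)$ and contains $p'\in W$, so it lies in $W$; in particular $x_p(r_t)$ is a point of $W$ not on $\partial B_{R'}$, hence an interior point of $W$ in $\R^n$. Let $Z_t$ be the connected component of $f^{-1}(t)$ through $x_p(r_t)$. As $t\neq\delta$ and $t<\mu$, $f^{-1}(t)$ is disjoint from the frontier of $W$; since $Z_t$ is connected and meets the interior of $W$ while missing its frontier, $Z_t\subset W$, and then, $f$ being constantly $t<\mu$ on $Z_t$ while $f\ge\mu$ on $\Omega_p'=W\cap\partial B_{R'}$, we must have $Z_t\cap\partial B_{R'}=\emptyset$; thus $Z_t\subset\R^n\setminus B_{R'}$. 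Now $r_a$ is proper, so $r_a|_{Z_t}$ attains a minimum at some $q_t\in Z_t$, and $r_a(q_t)>R'$ because $Z_t\cap\partial B_{R'}=\emptyset$. Hence $q_t$ is a critical point of $r_a|_{f^{-1}(t)}$ lying outside $B_R$ and off $\Sing(f)$; since $t$ is a regular value, $q_t-a$ is parallel to $\grad f(q_t)$, so $q_t\in(\Gamma\setminus\Sing(f))\setminus\Int B_R$ and therefore $q_t$ lies on a tangency branch $\Gamma_q$ at $\infty$, i.e.\ $q_t=x_q(r_a(q_t))$. Choosing regular values $t_k\downarrow\lambda$ in $(\lambda,\mu)$ and using that there are only finitely many tangency branches, after passing to a subsequence all the points $q_{t_k}$ lie on one branch $\Gamma_q$, say $q_{t_k}=x_q(\rho_k)$ with $\rho_k:=r_a(q_{t_k})\ge R'$. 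If $\{\rho_k\}$ were bounded, a subsequence would give $\rho_k\to\rho_\infty$ and then $t_k=f(x_q(\rho_k))\to f(x_q(\rho_\infty))=\lambda$, i.e.\ $\Gamma_q\cap f^{-1}(\lambda)\neq\emptyset$, contradicting property~(i) of Section~\ref{sec24} (recall $\lambda\in T_\infty(f)$). Therefore $\rho_k\to\infty$, so $\inf\{\|x-a\|:x\in Z_{t_k}\}=\rho_k\to\infty$, and by Definition~\ref{dfn21} there is a vanishing component at $\infty$ when $t$ tends to $\lambda$.

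The delicate step I expect to be the main obstacle is the structural input of the second paragraph: that $W$ meets $\partial B_{R'}$ in exactly the single component $\Omega_p'$ — which is precisely what the enlargement $R'>\max\{R,c_1,\dots,c_k\}$ buys — together with the realization that one must restrict to $t<\mu=\min_{\overline{\Omega_p'}}f$, so that $f^{-1}(t)$ cannot leak out of $W$ through $\Omega_p'$ into $\Int B_{R'}$. Once $W$ and $\mu$ are in hand, the recession argument is essentially the one used in the proof of Theorem~\ref{thm31}, and no global trivialization over $(\lambda,\delta]$ is needed here.
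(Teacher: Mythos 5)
Your proof is correct and follows essentially the same route as the paper: your region $W$ is the paper's $H'$, your threshold $\mu=\min_{\overline{\Omega_p'}}f$ is the paper's $\delta$, and the product structure over $\Omega_p'$ guaranteed by $R'>\max\{R,c_1,\dots,c_k\}$ is used to trap the fiber component through $\Gamma_p$ outside $B_{R'}$ exactly as in the paper, the only (harmless) difference being that you conclude the escape to infinity via the minimum of $r_a$ landing on a tangency branch and property~(i) (as in the ``if'' part of Theorem~\ref{thm31}), while the paper concludes from $\bar Y_{(\lambda,f(p)]}\cap f^{-1}(\lambda)=\emptyset$. The only nitpick is the passing claim that $f<\delta$ on $\overline{\Omega_p'}$ (equality holds on its frontier), which you immediately correct to $\lambda<f\le\delta$ and which does not affect the argument.
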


\begin{proof}
Consider the case where $p\in P_{\rm min}(\lambda)$.
Let $H'$ be the connected component of $\R^n\setminus (f^{-1}(f(p))\cup \Int B_{R'})$ intersecting $\Gamma_p$ and $\bar H'$ be its closure.
Let $\bar\Omega_p'$ be the closure of $\Omega_p'$ in $\partial B_{R'}$. 
Since $R'>\max\{R, c_1,\ldots,c_k\}$, $\bar H'$ is diffeomorphic to $\bar\Omega_p'\times [0,1)$.
Let $Y_{(\lambda,f(p)]}$ be the connected component of $f^{-1}((\lambda,f(p)])\setminus\Int B_{R'}$ intersecting $\Gamma_p$ and $\bar Y_{(\lambda,f(p)]}$ be its closure. 
The inclusion $\bar Y_{(\lambda,f(p)]}\subset \bar H'$, the property~(iii), and the assumption
$f^{-1}(\lambda)\cap \Omega_p'=\emptyset$ imply that
 $\bar Y_{(\lambda,f(p)]}\cap f^{-1}(\lambda)=\emptyset$.
Since $f\searrow\lambda$ along $\Gamma_p$ and 
$f^{-1}(\lambda)\cap \Omega_p'=\emptyset$,  
we have $\lambda<f(x)$ for $x\in \bar\Omega_p'$, $f(x)=f(p)$ for $x\in\partial\bar\Omega_p'$,
and there exists a point $x'\in\Omega_p'$ such that $f(x')<f(p)$.
Set $\delta=\min\{f(x)\mid x\in \bar\Omega_p'\}$. Note that $\lambda<\delta<f(p)$.
For $t\in (\lambda, \delta)$, the connected component $Y_t$ of $f^{-1}(t)$ intersecting $\Gamma_p$ does not intersect $\Omega_p'$,  and therefore it is contained in $Y_{(\lambda,f(p)]}$.
Since $\bar Y_{(\lambda,f(p)]}\cap f^{-1}(\lambda)=\emptyset$, 
$\{Y_t\}$ is a vanishing component at $\infty$ when $t$ tends to $\lambda$.

The assertion for the case $p\in P_{\rm max}(\lambda)$ is proved similarly.
\end{proof}

\begin{lemma}\label{lemma34}
If there exists a vanishing component at $\infty$ when $t$ tends to $\lambda$, then
there exists a point $p\in (\Gamma\setminus(\Sing(f)))\cap \partial B_R$ with  $f^{-1}(\lambda)\cap\Omega_p'=\emptyset$.
\end{lemma}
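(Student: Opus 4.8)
The plan is to prove the contrapositive of Lemma~\ref{lemma34} is essentially already handled by Theorem~\ref{thm31} and Lemma~\ref{lemma32}, so the real content here is a \emph{converse} direction: starting from a vanishing component, I must locate a tangency branch point $p$ on $\partial B_R$ whose associated region $\Omega_p'$ on the enlarged sphere $\partial B_{R'}$ avoids $f^{-1}(\lambda)$. The idea is to reverse-engineer the construction in Remark~\ref{rem32}(3-2). Suppose $\{Y_t\}$ vanishes at $\infty$ as $t\to\lambda$; by symmetry assume $t>\lambda$. By the ``only if'' part of Theorem~\ref{thm31}, there exists a local minimum point $p\in\partial B_R$ of $f|_{\partial B_R}$ with $f\searrow\lambda$ along $\Gamma_p$ such that the component $Z_{f(p)}$ of $f^{-1}(f(p))$ containing $p$ meets $\partial B_R$ in isolated points only. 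In the notation of Step~2, $p\in P_{\rm min}(\lambda)$. Then $Z_{f(p)}\subset\R^n\setminus\Int B_R$, exactly as in the ``if'' part of the proof of Theorem~\ref{thm31}.

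First I would pass to the enlarged radius: let $c_1,\dots,c_k$ be the critical values of $r_a(x)=\|x-a\|$ on $f^{-1}(f(p))$, choose $R'>\max\{R,c_1,\dots,c_k\}$ as in (3-2), and let $\Omega_p'$ be the connected component of $\partial B_{R'}\setminus f^{-1}(f(p))$ meeting $\Gamma_p$. The key claim is that $f^{-1}(\lambda)\cap\Omega_p'=\emptyset$. To see this, let $H'$ be the connected component of $\R^n\setminus(f^{-1}(f(p))\cup\Int B_{R'})$ meeting $\Gamma_p$; since $R'$ exceeds all the critical values of $r_a$ on $f^{-1}(f(p))$, the closure $\bar H'$ is diffeomorphic to $\bar\Omega_p'\times[0,1)$, and $\Omega_p'=\bar H'\cap\partial B_{R'}$. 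The tangency branch $\Gamma_p$ lies in $H'$ beyond radius $R'$. Now the vanishing component $Y_t$ through $\Gamma_p$ (for $t$ slightly above $\lambda$) stays in $H'$: if it ever met $f^{-1}(\lambda)$ it would not vanish (this is the Case~2 argument from Theorem~\ref{thm31}, using property~(i) to confine tangency branches meeting $\{Y_t\}$ to a single component of $\R^n\setminus(f^{-1}(\lambda)\cup\Int B_R)$). Suppose for contradiction $f^{-1}(\lambda)$ met $\Omega_p'$. Then $f^{-1}(\lambda)\cap H'\ne\emptyset$; tracing a path in $H'$ from a point of $\Gamma_p\cap H'$ (where $f>\lambda$) to a point of $f^{-1}(\lambda)\cap H'$, and using that the whole family $\{Y_t: t\in(\lambda,f(p)]\}$ sweeps out a connected subset of $\bar H'$, one produces a component of $f^{-1}(t)$ for small $t>\lambda$ that meets $f^{-1}(\lambda)$ along $\partial B_{R'}$-level — contradicting that this family vanishes at $\infty$. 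More directly: if $f^{-1}(\lambda)\cap\Omega_p'\ne\emptyset$, then by property~(iii) the component $Y_{(\lambda,f(p)]}$ of $f^{-1}((\lambda,f(p)])\setminus\Int B_{R'}$ through $\Gamma_p$ would have its closure meeting $f^{-1}(\lambda)$, so for $t$ near $\lambda$ the component $Y_t$ through $\Gamma_p$ would be the one that is forced (by triviality of the fibration over $(\lambda,f(p)]$ away from $f^{-1}(\lambda)$) to remain at bounded distance — contradicting vanishing. Thus $f^{-1}(\lambda)\cap\Omega_p'=\emptyset$, and $p$ is the desired point.

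The case where the vanishing occurs as $t\to\lambda$ with $t<\lambda$ is symmetric, using $P_{\rm max}(\lambda)$, a local maximum point $p$ with $f\nearrow\lambda$ along $\Gamma_p$, and the component $Z_{f(p)}$ of $f^{-1}(f(p))$ inside $\R^n\setminus\Int B_R$; the same enlargement-and-confinement argument applies verbatim with inequalities reversed. In either case I would remark that the choice of $R'$ depends only on $p$ (through the finitely many critical values of $r_a$ on $f^{-1}(f(p))$), so that the finite list of sphere components examined in Step~(3-2) does contain $\Omega_p'$, completing the verification that the procedure of Remark~\ref{rem32} detects every vanishing component.

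The main obstacle I anticipate is making the ``confinement'' step fully rigorous: showing that the vanishing component $Y_t$, for $t$ just above $\lambda$, is forced to be the component passing through $\Gamma_p$ and hence is trapped inside $H'$, rather than some other component of $f^{-1}(t)$. This requires combining property~(i) (tangency branches meeting $\{Y_t\}$ lie in one component of $\R^n\setminus(f^{-1}(\lambda)\cup\Int B_R)$), property~(iii) (transversality of $f^{-1}(f(p))$ to all large spheres), and the local-triviality of $f$ over the interval $(\lambda,f(p)]$ to pin down which component is which; one must be careful that $H'$ (defined at radius $R'$) correctly refines the component $H$ (at radius $R$) containing the vanishing family, which is where the inequality $R'>\max\{R,c_1,\dots,c_k\}$ is used. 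The rest — the diffeomorphism $\bar H'\cong\bar\Omega_p'\times[0,1)$ and the final contradiction — is routine given the earlier lemmas.
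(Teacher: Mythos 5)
Your opening step coincides with the paper's: invoke the ``only if'' direction of Theorem~\ref{thm31} to get a local minimum point $p$ with $f\searrow\lambda$ along $\Gamma_p$ such that $Z_{f(p)}\cap\partial B_R$ consists of isolated points (hence $Z_{f(p)}\subset\R^n\setminus\Int B_R$), then pass to the radius $R'$ of Step~(3-2) and the component $\Omega_p'$ of $\partial B_{R'}\setminus f^{-1}(f(p))$ meeting $\Gamma_p$. The gap is in your key claim. Assuming $f^{-1}(\lambda)\cap\Omega_p'\ne\emptyset$, you try to contradict the vanishing of the family through $\Gamma_p$ by asserting that the component $Y_{(\lambda,f(p)]}$ of $f^{-1}((\lambda,f(p)])\setminus\Int B_{R'}$ through $\Gamma_p$ ``would have its closure meeting $f^{-1}(\lambda)$'', or that $Y_t$ ``would be forced to remain at bounded distance''. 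Neither assertion is justified: the fact that $f^{-1}(\lambda)$ meets $\Omega_p'$ somewhere does not force the particular fiber components through $\Gamma_p$ to approach $f^{-1}(\lambda)$; the region over $\Omega_p'$ may contain several components of $f^{-1}((\lambda,f(p)])\setminus\Int B_{R'}$, and your path/intermediate-value sketch only produces \emph{some} component of $f^{-1}(t)$ close to $f^{-1}(\lambda)$, not the one through $\Gamma_p$. (Your confinement statement is also inaccurate as written: $Y_t$ avoids $B_R$ but may enter the shell between $\partial B_R$ and $\partial B_{R'}$, so $Y_t\subset H'$ does not hold literally, and $Y_t\setminus\Int B_{R'}$ could a priori have pieces in other components of $\R^n\setminus(f^{-1}(f(p))\cup\Int B_{R'})$. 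Moreover the originally given vanishing family need not pass through $\Gamma_p$; that the family through $\Gamma_p$ vanishes is itself the ``if'' direction of Theorem~\ref{thm31}.) You flag exactly this ``confinement'' step as the anticipated obstacle and leave it open, but it is the entire content of the lemma and it is not routine.

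The paper closes this step without any reference to the behavior of $Y_t$ as $t\to\lambda$, by a short separation argument that uses the isolatedness of $Z_{f(p)}\cap\partial B_R$ in an essential way: as in the ``if'' part of Theorem~\ref{thm31}, $\Gamma_p\setminus\{p\}$ and $\partial B_R\setminus Z_{f(p)}$ lie in different connected components of $\R^n\setminus Z_{f(p)}$, and since $\Omega_p'$ is connected, disjoint from $Z_{f(p)}$ and meets $\Gamma_p$, it lies on the $\Gamma_p$-side. If there were a point $x\in f^{-1}(\lambda)\cap\Omega_p'$, property~(iii) would give an arc inside $f^{-1}(\lambda)$ from $x$ to a point of $f^{-1}(\lambda)\cap\partial B_R$, which lies on the other side of $Z_{f(p)}$; the arc would then have to cross $Z_{f(p)}\subset f^{-1}(f(p))$, which is impossible because $f\equiv\lambda\ne f(p)$ on the arc. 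To repair your proof you would have to import precisely this separation input, at which point the paper's direct argument is both shorter and complete.
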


\begin{proof}
Suppose that there exists a vanishing component at $\infty$ when $t$ tends to $\lambda$. We prove only the case $t>\lambda$.
By Theorem~\ref{thm31}, there exists a local minimum point $p\in\Gamma_p\cap \partial B_R$ of $f|_{\partial B_R}$ such that the intersection of the connected component $Z_{f(p)}$ of $f^{-1}(f(p))$ containing $p$ with the sphere $\partial B_R$ consists of isolated points. 
Put $\delta=f(p)$ and let $Z_{(\lambda,\delta]}$ be the connected component of $f^{-1}((\lambda,\delta])$ containing $p$.
Then, as shown in the proof of the ``if'' assertion of Theorem~\ref{thm31}, we have $Z_{(\lambda,\delta]}\cap \partial B_R=Z_\delta\cap \partial B_R$. 
Let $R'$ be the radius chosen as in~(3-2) and $\Omega_p'$ be the connected component of $\partial B_{R'}\setminus Z_\delta$ intersecting $\Gamma_p$. Assume that there exists an intersection point $x\in f^{-1}(\lambda) \cap \Omega_p'$.
By the property~(iii), there exists an arc on $f^{-1}(\lambda)$ connecting $x$ and a point on $f^{-1}(\lambda)\cap \partial B_R$, but such an arc should intersect $Z_\delta$. This contradicts the fact that the image of this arc is $\lambda$.
\end{proof}

\section{Proof of Theorem~\ref{thm1}}\label{sec4}

Now, we restrict our setting to the case of polynomial functions with three variables.
Let $f:\mathbb{R}^3\to \mathbb{R}$ be a polynomial function.
For each $\lambda\in T_{\infty}(f)\setminus K_0(f)$, 
there exists a sufficiently small $\ve>0$ such that, for $I_\lambda^-=(\lambda-\ve,\lambda)$ and 
$I_\lambda^+=(\lambda,\lambda+\ve)$, 
the restriction of $f$ to $f^{-1}(I_\lambda^*)$ and the restriction of $f$ to $f^{-1}(I_\lambda^*)\cap B_R$ 
are trivial fibrations unless $f^{-1}(I_\lambda^*)=\emptyset$, where $*\in\{-, +\}$.
Here $\ve$ is chosen so that $f^{-1}(t)$ intersects $\partial B_R$ transversely for $t\in I_\lambda^*$.
Then the restriction of $f$ to $f^{-1}(I_\lambda^*)\cap (\R^n\setminus \text{Int} B_R)$ is also a trivial fibration. 

The surface $f^{-1}(\lambda)\setminus \text{Int} B_R$ divides $\R^3\setminus \text{Int} B_R$ into a finite number of
connected components $H^*_{\lambda,1},\ldots,H^*_{\lambda,n_\lambda}$ by the property~(iii),
where $*=-$ if $f(x)<\lambda$ on $H^*_{\lambda,i}$ and $*=+$ if $f(x)>\lambda$ on $H^*_{\lambda,i}$.
Each $H^*_{\lambda,i}$ is homeomorphic to $\Omega^*_{\lambda,i}\times [0, 1)$, where $\Omega^*_{\lambda,i}=H^*_{\lambda,i}\cap\partial B_R$. 

Let $\Ind(\lambda, \Omega_{\lambda,i}^*)$ be the sum of indices of the gradient vector field of $f$ restricted to $\partial B_R$ for all zeros belonging to $\Gamma^{(\lambda)}$ on $\Omega_{\lambda,i}^*$ as defined in the introduction.

\begin{lemma}\label{lmindex}
$\chi(f^{-1}(t)\cap H^*_{\lambda,i})=\Ind(\lambda, \Omega^*_{\lambda,i})$ for any $t\in I_\lambda^*$.
\end{lemma}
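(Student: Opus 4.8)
The plan is to identify both sides with an Euler-characteristic count of critical points of a well-chosen Morse function on the non-compact surface $f^{-1}(t)\cap H^*_{\lambda,i}$, using the distance function $r_a(x)=\|x-a\|$ as that Morse function. First I would fix $t\in I_\lambda^*$ and work on the piece $W_t:=f^{-1}(t)\cap H^*_{\lambda,i}$. By the properties of the radius $R$ from Section~\ref{sec24} (in particular (iii) and the transversality of $f^{-1}(t)$ to $\partial B_r$ for $r\ge R$), $W_t$ is a smooth surface with boundary $W_t\cap\partial B_R$, and $r_a$ restricted to $W_t$ is proper and bounded below by $R$. The critical points of $r_a|_{W_t}$ in the interior of $W_t$ are exactly the points where $W_t$ is tangent to a sphere $\partial B_r$, i.e.\ the points of $\Gamma\cap W_t$; for $t\in I_\lambda^*$ these lie on the tangency branches $\Gamma_p$ with $\lambda_p=\lambda$, hence on $\Gamma^{(\lambda)}\cap\Omega_{\lambda,i}^*$, and by the generic choice of $a$ (Lemma~\ref{lemma23}) they are non-degenerate, so $r_a|_{W_t}$ is a Morse function.

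The second step is the index bookkeeping. For a Morse function on a surface, the Euler characteristic is the alternating sum of the numbers of critical points of index $0,1,2$; but $W_t$ has boundary, and $r_a|_{W_t}$ attains its minima on $\partial B_R$, not in the interior — it is a Morse function with the boundary as a ``minimum level'' in the sense that $r_a\ge R$ with equality exactly on $\partial W_t$, and $W_t$ deformation retracts (pushing outward along $-\grad r_a$) onto a CW complex built from $\partial W_t$ together with one cell of dimension $\mathrm{Ind}$ for each interior critical point. Consequently $\chi(W_t)=\chi(\partial W_t)+\sum_{q}(-1)^{\mathrm{ind}_q(r_a|_{W_t})}$, and since $\partial W_t$ is a disjoint union of circles, $\chi(\partial W_t)=0$. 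So $\chi(W_t)=\sum_{q\in\Gamma^{(\lambda)}\cap\Omega_{\lambda,i}^*}(-1)^{\mathrm{ind}_q(r_a|_{W_t})}$. It remains to check that, at each such $q$, the Morse index of $r_a|_{f^{-1}(t)}$ equals the local contribution $\mathrm{Ind}_q(X_{a,R})$ appearing in the definition of $\mathrm{Ind}(\lambda,\Omega_{\lambda,i}^*)$: a point of index $0$ or $2$ (local min or max of $r_a$ on the fiber) contributes $+1$ and is a center of $X_{a,R}$ with index $+1$; a point of index $1$ (saddle) contributes $-1$ and is a saddle of $X_{a,R}$ with index $-1$. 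This correspondence is precisely Remark~\ref{rem25}, which translates the $\mathcal P$-type of $q$ as a critical point of $f|_{\partial B_R}$ into the $\mathcal P$-type of $q$ as a critical point of $r_a|_{f^{-1}(f(q))}$; together with Lemma~\ref{lemma25} (constancy of $\mathcal P$ along $\Gamma_q$) this lets me transport the index computed at the representative $q\in\Gamma_q\cap\partial B_R$ to the point on $f^{-1}(t)$, in dimension $n=3$ where the index of the planar gradient field $X_{a,R}$ at a center is $+1$ and at a saddle is $-1$.

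I expect the main obstacle to be the first step done carefully: verifying that $r_a|_{W_t}$ really is a proper Morse function with no critical points escaping to infinity and none accumulating on $\partial B_R$, and justifying the handle-decomposition formula $\chi(W_t)=\chi(\partial W_t)+\sum(-1)^{\mathrm{ind}}$ for a non-compact surface-with-boundary on which the function is proper and bounded below. The properness and the location of critical points follow from properties (i), (iii), (iv) of Section~\ref{sec24} and the definition of $\Gamma$; the Morse-theory formula is standard once properness is in hand (each interior critical point attaches one cell of its index to the sublevel set, and the boundary $\partial B_R$ plays the role of the bottom level set), so the real work is assembling these ingredients rather than proving anything deep. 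The index-matching in the last step is essentially a restatement of Remark~\ref{rem25} and is routine in the surface case.
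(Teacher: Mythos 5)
Your proposal is correct and follows essentially the same route as the paper's proof: both use the distance function $r_a$ as a Morse function on $f^{-1}(t)\cap H^*_{\lambda,i}$, identify its critical points with the tangency branches through $\Gamma^{(\lambda)}\cap\Omega^*_{\lambda,i}$, match $(-1)^{\mathrm{ind}}$ with $\Ind_p(X_{a,R})$ via Remark~\ref{rem25} and Lemma~\ref{lemma25}, and use that the boundary circles contribute zero to $\chi$. The only cosmetic difference is that the paper truncates at a larger radius $R'$ and applies compact Morse theory, whereas you invoke properness of $r_a$ on the noncompact surface with a collar end, which amounts to the same reduction.
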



\begin{proof} 
Choose $R'>R$ sufficiently large so that $f^{-1}(t)$ intersects $\partial B_{R'}$ transversely and $f^{-1}(t)\setminus \Int B_{R'}$ is diffeomorphic to $(f^{-1}(t)\cap \partial B_R) \times [0,1)$ for $t\in I_\lambda^*$. 
Then $f^{-1}(t)\cap H_{\lambda,i}^*$ has the same homotopy type as $f^{-1}(t)\cap H_{\lambda,i}^*\cap B_R^{R'}$, where $B_R^{R'}=\{x\in\R^3\mid R\leq \|x-a\|\leq R'\}$.
Hence we have 
\[
   \chi(f^{-1}(t)\cap H^*_{\lambda, i})= \chi(f^{-1}(t)\cap H_{\lambda,i}^*\cap B_R^{R'}).
\]

Consider the distance function $r_a(x)=\|x-a\|$ on $f^{-1}(t)\cap H^*_{\lambda,i}\cap B_R^{R'}$.
Due to a generic choice of the center of $B_R$ in Section~\ref{sec22}, 
this function has only non-degenerate critical points and has no critical point on the boundary. 
Hence, there is a one-to-one correspondence between critical points of $r_a$ on $f^{-1}(t)\cap H^*_{\lambda,i}$ and the tangency branches $\Gamma_p$ passing through $p\in \Gamma^{(\lambda)}\cap \Omega_{\lambda,i}^*$ as mentioned in Remark~\ref{rem25}~(1).
If $p\in \Gamma^{(\lambda)}\cap \Omega_{\lambda,i}^*$ is local minimum or maximum of $f|_{\partial B_R}$ 
then $\Ind_p(X_{a,R})=1$ and the Morse index $i(p_t)$ of the distance function $r_a$ on $f^{-1}(t)\cap H^*_{\lambda,i}$ at the intersection point $p_t$ of $\Gamma_p$ with $f^{-1}(t)$ is $0$ or $2$.
If $p\in \Gamma^{(\lambda)}\cap \Omega_{\lambda,i}$ is a saddle point of $f|_{\partial B_R}$ 
then $\Ind_p(X_{a,R})=-1$ and the Morse index $i(p_t)$ at the intersection point $p_t$ of $\Gamma_p$ with $f^{-1}(t)$ is $1$. 
Hence we have $\Ind_p(X_{a,R})=(-1)^{i(p_t)}$.
Since the Euler characteristic of $f^{-1}(t)\cap \Omega^*_{\lambda,i}$ is $0$, by the Morse Theory, we have
\[
\begin{split}
   \chi(f^{-1}(t)\cap H_{\lambda,i}^*\cap B_R^{R'})&=\sum_{p\in \Gamma^{(\lambda)}\cap \Omega_{\lambda,i}}(-1)^{i(p_t)} \\
   &=\sum_{p\in \Gamma^{(\lambda)}\cap \Omega_{\lambda,i}}\Ind_p(X_{a,R})=\Ind(\lambda,\Omega^*_{\lambda,i}).
\end{split}
\]
This completes the proof. 
\end{proof}

\begin{proof}[Proof of Theorem~\ref{thm1}]
We prove the first assertion by contraposition.
Assume that $\lambda$ is a typical value of $f$.
There exists a sufficiently small $\ve>0$ such that $f$ is a trivial fibration on $I_\ve=(\lambda-\epsilon, \lambda+\epsilon)$. 
Let $\Omega_{\lambda,i}^*$ be a connected component of $\partial B_R\setminus f^{-1}(\lambda)$ and $\partial \bar\Omega_{\lambda,i}^*$ be the boundary of the closure of $\Omega_{\lambda,i}^*$ in $\partial B_R$, which is a union of circles.
By the property (iii), the connected component $Y$ of $f^{-1}(\lambda)\setminus \Int B_R$ intersecting $\partial \bar\Omega_{\lambda,i}^*$ is diffeomorphic to $\partial \bar\Omega_{\lambda,i}^*\times [0,1)$, and hence $\chi(Y)=0$. 
This and the triviality of $f$ on $I_\ve$ imply that $\chi(f^{-1}(t)\cap H_{\lambda,i}^*)=0$ for $t\in I_\lambda^*$, 
where $H_{\lambda,i}^*$ is the connected component of $\R^3\setminus (f^{-1}(\lambda)\cup \Int B_R)$ intersecting $\Omega_{\lambda,i}^*$.
Combining this with Lemma~\ref{lmindex} we obtain $\Ind(\lambda, \Omega_{\lambda,i}^*)=0$.
This completes the proof of the first assertion.

Next we prove the second assertion. 
Because there does not exist a component of $f^{-1}(t)$ vanishing at $\infty$ when $t$ tends to $\lambda$, 
there exists a sufficiently small $\ve>0$ such that
each connected component of $f^{-1}(t)$ intersects $\partial B_R$ for all $t\in I_\lambda^-\cup I_\lambda^+$. 
Let $H_{\lambda,i}^*$ be a connected component of $\R^3\setminus(f^{-1}(\lambda)\cup\Int B_R)$
and $\{Y_t^1,\ldots,Y_t^s\}$ be the connected components of $f^{-1}(t)\cap H_{\lambda,i}^*$.
Set $\Omega_{\lambda,i}^*=H_{\lambda,i}^*\cap \partial B_R$. 
Since $\Ind(\lambda,\Omega_{\lambda,i}^*)=0$, we have $\chi(f^{-1}(t)\cap H_{\lambda,i}^*)=0$ by Lemma~\ref{lmindex}. Hence
\[
   \sum_{j=1}^s\chi(Y_t^j)=0.
\]
Choose $R'>R$ sufficiently large so that $f^{-1}(t)$ intersects $\partial B_{R'}$ transversely and $f^{-1}(t)\setminus \Int B_{R'}$ is diffeomorphic to $(f^{-1}(t)\cap \partial B_{R'})\times [0,1)$ for $t\in I_\lambda^*$, and set $B_R^{R'}=\{x\in\R^3\mid R\leq \|x-a\|\leq R'\}$.
Then, as mentioned at the beginning of the proof of Lemma~\ref{lmindex}, $\chi(Y_t^j \cap B_R^{R'})=\chi(Y_t^j)$ holds.
Hence
\begin{equation}\label{eq41}
   \sum_{j=1}^s\chi(Y_t^j \cap B_R^{R'})=0.
\end{equation}
Here each $Y_t^j \cap B_R^{R'}$ is a compact, connected, orientable surface embedded in $\R^3$.

We claim that $\chi(Y_t^j \cap B_R^{R'})=0$ for any $j=1,\ldots,s$.
If $s=1$ then it follows from equation~\eqref{eq41}.
Suppose that $s\geq 2$. 
Assume that $\chi(Y_t^{j_0} \cap B_R^{R'})\ne 0$ for some $j_0\in\{1,\ldots,s\}$.
Then there exists a connected component $Y_t^{j_1} $ with $\chi(Y_t^{j_1} \cap B_R^{R'})>0$ by~\eqref{eq41}.
Since $Y_t^{j_1} \cap B_R^{R'}$ is a compact, connected, orientable surface, it is diffeomorphic to a disk.
The boundary of this disk lies on $\partial B_R$ since $\ve>0$ is chosen so that $Y_t^{j_1}\cap \partial B_R\ne\emptyset$. Moreover, this boundary is parallel to a boundary component of the closure of $\Omega_{\lambda_i}^*$ due to the property~(iii).
Since $H_{\lambda,i}^*$ is homeomorphic to $\Omega_{\lambda,i}^*\times [0,1)$ and the disk $Y_t^{j_1}\cap B_R^{R'}$ is relatively embedded in $H_{\lambda,i}^*$, $\Omega_{\lambda_i}^*$ should be a disk.
Since the boundary of $\Omega_{\lambda_i}^*$ is connected, $f^{-1}(t)\cap H_{\lambda,i}^*$ is also connected. This contradicts $s\geq 2$.

Now we have $\chi(Y_t^j \cap B_R^{R'})=0$ for any $t\in I_\lambda^*$ and $j=1,\ldots,s$.
This means that all of these connected components are diffeomorphic to $S^1\times [0,1]$. 
Therefore, the relative homotopy groups $\pi_i(f^{-1}(I_\lambda^*\cup\{\lambda\}), f^{-1}(\lambda),x)$ are trivial for all $i\in\N$ and any base point $x\in f^{-1}(\lambda)$. Note that this conclusion holds for both of the cases $*=-$ and $*=+$.
Hence, by~\cite[Proposition 3.3 and Theorem 1.2]{IN}, 
for $I_\lambda=(\lambda-\ve, \lambda+\ve)$, the map
\[
   f|_{f^{-1}(I_\lambda)}: f^{-1}(I_\lambda)\to I_\lambda
\]
is a Serre fibration. Then, this implies that $f|_{f^{-1}(I_\lambda)}$ is a trivial fibration by~\cite[Corollary 32]{Mei02}. Hence $\lambda$ is a typical value at $\infty$ of $f$. 
\end{proof}

\section{Typical values of polynomial maps with $2$-dimensional fibers}\label{sec5}

In this section, we study polynomial maps from $\R^n$ to $\R^{n-2}$. The case $n=3$ is studied in the previous section.

Let $F:\R^n\to \R^{n-2}$ be a polynomial map, where $n\geq 3$, and $\lambda$ be a point in $F(\R^n)\setminus \bar K_0(F)$,
where $\bar K_0(F)$ is the closure of $K_0(F)$ in $\R^{n-2}$.
Let $B_R$ be the $n$-dimensional ball in $\R^n$ centered at $a\in\R^n$ and with radius $R>0$.
As shown in \cite[Lemma 3.2]{IN}, we can choose a sufficiently large radius $R>0$ satisfying the following property:
\begin{itemize}
\item[(v)]  Each connected component $Y$ of $F^{-1}(\lambda)\setminus \Int B_R$ 
intersects $\partial B_r$ transversely for any $r\geq R$. In particular, 
$Y\setminus \Int B_r$ is diffeomorphic to $(Y\cap \partial B_r)\times [0,1)$ for any $r\geq R$.
\end{itemize}
In particular, there is a deformation-retract from $F^{-1}(\lambda)$ to $F^{-1}(\lambda)\cap B_R$.

\begin{thm}\label{thm51} 
Let $F:\R^n\to \R^{n-2}$ be a polynomial map, where $n\geq 3$.
For $\lambda\in F(\R^n)\setminus \bar K_0(F)$, choose a radius $R$ so that the property~{\rm (v)} holds.
Then, $\lambda$ is a typical value at $\infty$ of $F$ if and only if the following are satisfied:
\begin{itemize}
\item[(1)] There is no vanishing component at $\infty$ when $t$ tends to $\lambda$;
\item[(2)] There exists a neighborhood $D$ of $\lambda$ in $\R^{n-2}$ such that,  for all $t\in D$, 
\begin{itemize}
\item[(2-1)] $F^{-1}(t)\setminus \Int B_R$ has no compact, connected component, and
\item[(2-2)] $\chi(F^{-1}(t))= \chi(F^{-1}(\lambda))$ holds.
\end{itemize}
\end{itemize}
\end{thm}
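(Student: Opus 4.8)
The plan is to reduce Theorem~\ref{thm51} to the three-variable case handled in Theorem~\ref{thm1} by slicing the target $\R^{n-2}$ along generic lines through $\lambda$ and invoking the fibration criterion of \cite{IN, Mei02}, exactly as in the proof of Theorem~\ref{thm1}. The backbone is: $F|_{F^{-1}(I_\lambda)}$ is a trivial fibration over a neighborhood $I_\lambda$ of $\lambda$ if and only if it is a Serre fibration (by \cite[Corollary~32]{Mei02}, using that $F$ is a submersion over $F(\R^n)\setminus\bar K_0(F)$), and the Serre fibration property over $I_\lambda$ is detected by the vanishing of the relative homotopy groups $\pi_i(F^{-1}(\bar D),F^{-1}(\lambda),x)$ for all $i$ and all base points, via \cite[Proposition~3.3 and Theorem~1.2]{IN}. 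So the whole theorem is equivalent to: all these relative homotopy groups vanish for a suitable small closed neighborhood $D$ of $\lambda$ $\iff$ (1), (2-1), (2-2) hold.

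First I would prove the ``only if'' direction, which is the easy one: if $\lambda$ is typical then $F$ is a trivial fibration over some $D$, so there are no vanishing components (condition (1)), the diffeomorphism type of $F^{-1}(t)$ is constant over $D$ giving (2-2), and the triviality together with property~(v) forces $F^{-1}(t)\setminus\Int B_R$ to be $F$-fiberwise diffeomorphic to $F^{-1}(\lambda)\setminus\Int B_R$, which by (v) has every component of the form $(Y\cap\partial B_r)\times[0,1)$ and hence non-compact; this gives (2-1).

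For the ``if'' direction I would argue that, granted (1), (2-1), (2-2), the relative homotopy groups vanish. Fix a small closed ball $D$ around $\lambda$ with $F$ a submersion over it and $F^{-1}(D)$ having the properties from \cite[Lemma~3.2]{IN}. By (1) and a compactness/transversality argument, for $D$ small enough every connected component of $F^{-1}(t)$ meets $\partial B_R$ for all $t\in D$, and one can choose $R'>R$ so that each component $Y_t^j$ of $F^{-1}(t)\setminus\Int B_R$ deformation-retracts onto $Y_t^j\cap B_R^{R'}$, a compact surface with boundary on $\partial B_R\cup\partial B_{R'}$. The key topological input is the three-variable argument: restricting $F$ to the preimage of a generic line segment through $\lambda$ in $D$ reduces each such slice to a function $\R^3\supset F^{-1}(\ell)\to\ell$ of the type treated in Section~\ref{sec4}; condition (2-1) plays the role of ``$\Ind(\lambda,\Omega)=0$ for every $\Omega$'' (by Lemma~\ref{lmindex}, a non-zero index is exactly what produces a compact disk component $Y_t^j\cap B_R^{R'}$ outside $B_R$), and together with (2-2) the Euler-characteristic bookkeeping of equation~\eqref{eq41} forces $\chi(Y_t^j\cap B_R^{R'})=0$, so every $Y_t^j\cap B_R^{R'}$ is an annulus $S^1\times[0,1]$. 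Hence $F^{-1}(t)\setminus\Int B_R$ is, for all $t\in D$, a disjoint union of half-open cylinders $S^1\times[0,1)$ matching those of $F^{-1}(\lambda)$, so the inclusion $F^{-1}(\lambda)\hookrightarrow F^{-1}(\bar D)$ is a homotopy equivalence and all relative homotopy groups vanish. Then \cite{IN} gives a Serre fibration and \cite{Mei02} upgrades it to a trivial one, so $\lambda$ is typical.

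The main obstacle I expect is making rigorous the passage from the genuinely $2$-dimensional fiber situation over $\R^{n-2}$ to the ``ends are cylinders'' conclusion: over a multi-dimensional base the indices-of-gradient-vector-field description used in Section~\ref{sec4} is not literally available, so condition (2-1) must be shown to capture exactly the obstruction that in the $n=3$ case was phrased via $\Ind(\lambda,\Omega)$. Concretely, the delicate point is to rule out, using (2-1) and (2-2) together, that $F^{-1}(t)\setminus\Int B_R$ acquires, for $t$ near $\lambda$, a non-cylindrical non-compact end (e.g. a component of positive genus or with the wrong number of boundary circles) while keeping both the compactness condition and the total Euler characteristic unchanged; handling this requires the slicing argument to be done along enough generic segments to pin down the diffeomorphism type of each end, and a careful use of property~(v) to guarantee that the ends at radius $R$ already stabilize. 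Once the ends are identified as cylinders matching those of the central fiber, the rest is the same homotopy-theoretic machinery as in the proof of Theorem~\ref{thm1}.
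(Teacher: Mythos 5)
Your endgame coincides with the paper's: once every connected component of $F^{-1}(t)\setminus\Int B_R$ ($t\in D$) is known to be a half-open cylinder $S^1\times[0,1)$, the deformation retract onto $F^{-1}(t)\cap B_R$ together with the triviality of $F|_{F^{-1}(D)\cap B_R}$ kills the relative homotopy groups, and \cite[Proposition 3.3, Theorem 1.2]{IN} plus \cite[Corollary 32]{Mei02} give a trivial fibration. The problem is that the step which actually carries the theorem --- deducing the cylinder structure of the ends from (1), (2-1), (2-2) --- is not proved in your proposal. You delegate it to a reduction to the three-variable case by slicing the target along generic lines through $\lambda$ and invoking Lemma~\ref{lmindex}, and you yourself concede that this index machinery ``is not literally available'' here; indeed, for a line $\ell\subset\R^{n-2}$ the slice $F^{-1}(\ell)$ is a $3$-manifold embedded in $\R^n$, not $\R^3$, so the whole apparatus of Section~\ref{sec4} (the polar set $\Gamma$, the vector field $X_{a,R}$ on the round $2$-sphere, the generic choice of center, Lemma~\ref{lmindex}) does not apply to it. Moreover the dictionary you rely on, ``nonzero index $\Leftrightarrow$ compact disk component'', is nowhere established: in the paper the index hypothesis of Theorem~\ref{thm1} is \emph{replaced} by (2-1), not shown equivalent to it. You then flag the remaining difficulty (ruling out ends of positive genus or with the wrong number of boundary circles) and answer it only with ``slice along enough generic segments'', which is not an argument. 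This is a genuine gap, and it sits exactly where the content of the theorem lies.

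The paper closes this gap without any slicing or indices, by a short Euler-characteristic computation you could adopt. Conditions (1) and (2-1) exclude contractible connected components of $F^{-1}(t)\setminus\Int B_R$; since each component is then a non-compact (or boundary-bearing) orientable surface which is not contractible, $\chi(F^{-1}(t)\setminus\Int B_R)\le 0$. Because $F^{-1}(t)\cap\partial B_R$ is a disjoint union of circles, $\chi(F^{-1}(t))=\chi(F^{-1}(t)\cap B_R)+\chi(F^{-1}(t)\setminus\Int B_R)$, while $\chi(F^{-1}(t)\cap B_R)=\chi(F^{-1}(\lambda)\cap B_R)=\chi(F^{-1}(\lambda))$ by the triviality of $F|_{F^{-1}(D)\cap B_R}$ and property (v); together with (2-2) this forces $\chi(F^{-1}(t)\setminus\Int B_R)=0$, hence each component has $\chi=0$ and, by (2-1), is a copy of $S^1\times[0,1)$. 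Note that this bookkeeping already disposes of your worry about exotic ends: an end of negative Euler characteristic would have to be compensated by a component of positive Euler characteristic, i.e.\ a contractible one, which has been excluded; no analogue of equation~\eqref{eq41} via indices is needed. Finally, a smaller point: in the converse direction your assertion that a trivialization makes $F^{-1}(t)\setminus\Int B_R$ ``fiberwise diffeomorphic'' to $F^{-1}(\lambda)\setminus\Int B_R$ is unjustified, since trivializations need not preserve $B_R$; the paper does not spell this direction out, but if you include it, condition (2-1) requires an actual argument rather than this shortcut.
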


\begin{proof}
It is enough to show that if the conditions (1) and (2) are satisfied then $F$ is a trivial fibration over some neighborhood of $\lambda$. Assume that the two conditions are satisfied. Let $D$ be a small neighborhood of $\lambda$ as in the condition~(2).
We can choose $D$ small enough so that the fibers $F^{-1}(t)$ are regular and intersect $\partial B_R$ transversely for all $t\in D$. The map $F|_{F^{-1}(D)\cap B_R}: F^{-1}(D)\cap B_R\to D$ is a trivial fibration. 

By the conditions~(1) and~(2-1), $F^{-1}(t)\setminus \Int B_R$ does not have a connected component which is contractible for any $t\in D$. Hence we have $\chi(F^{-1}(t)\setminus \Int B_R)\leq 0$. Then, by the condition~(2-2) and the property (v), we have 
\[
\begin{split}
   \chi(F^{-1}(\lambda))&=\chi(F^{-1}(t))=\chi(F^{-1}(t)\cap B_R)+\chi(F^{-1}(t)\setminus\Int B_R) \\
&\leq  \chi(F^{-1}(t)\cap B_R)=\chi(F^{-1}(\lambda)\cap B_R)=\chi(F^{-1}(\lambda)),
\end{split}
\]
which implies that $\chi(F^{-1}(t)\setminus\Int B_R)=0$.
Here we used the fact that $F^{-1}(t)\cap \partial B_R$ is a disjoint union of circles and its Euler characteristic is $0$.
By the condition (2-1), $F^{-1}(t)\setminus\Int B_R$ is diffeomorphic to a disjoint union of a finite number of copies of $S^1\times [0,1)$. 

The rest of the proof is same as the last argument in the proof of Theorem~\ref{thm1}.
Since there exists a deformation-retract from $F^{-1}(t)$ to $F^{-1}(t)\cap B_R$ for each $t\in D$
and the map $F|_{F^{-1}(D)\cap B_R}$ is a trivial fibration,
the relative homotopy groups $\pi_i(F^{-1}(D), F^{-1}(\lambda),x)$ are trivial for all $i\in\N$ and any base point $x\in f^{-1}(\lambda)$. 
Then, by~\cite[Proposition 3.3 and Theorem 1.2]{IN}, the map $F|_{F^{-1}(D)}: F^{-1}(D)\to D$ is a Serre fibration and hence it is a trivial fibration by~\cite[Corollary 32]{Mei02}. Hence $\lambda$ is a typical value at $\infty$ of $F$. 
\end{proof}

\end{document}